\documentclass{article}

\usepackage{graphicx}
\usepackage{enumitem}

\usepackage{amsthm}
\theoremstyle{plain}
\newtheorem{theorem}{Theorem}
\newtheorem{proposition}[theorem]{Proposition}

\theoremstyle{definition}
\newtheorem{example}[theorem]{Example}

\usepackage{color}
\usepackage{times}
\usepackage{authblk}
\usepackage{hyperref}
\usepackage{url}
\usepackage{setspace}
\usepackage{amsmath,amssymb}
\usepackage{relsize}

\usepackage{mathtools}

\newcommand{\deltadiam}{\delta_{\mathrm{diam}}}

\newcommand{\whd}{\widehat{\delta}}
\renewcommand{\Re}{\mathbb{R}}

\newcommand{\bx}{\mathbf{x}}

\date{\today}

\title{Submodular and strongly submodular functions and diversities}

\author[1]{David Bryant}
\author[2]{Paul Tupper}

\affil[1]{Department of Mathematics and Statistics, University of Otago\\
PO Box 56, Dunedin 9054, New Zealand\\
\texttt{david.bryant@otago.ac.nz}}

\affil[2]{Department of Mathematics, Simon Fraser University\\
Burnaby, BC, Canada, V5A 1S6\\
\texttt{pft3@sfu.ca}}

\date{\today}

\begin{document}

\maketitle 

\begin{abstract}
Submodular functions and their close relatives play a key role in combinatorial optimization, decision theory and potential theory. Part of their importance and usefulness stems from the connections with convex functions and polytopes. Here we explore connections between these functions and metric theory, with the bridge provided by {\em diversities}, a recently developed generalization of metric spaces to (finite) sets rather than just pairs. Both submodular functions and strongly submodular functions correspond to natural classes of diversities. Submodular diversities, as we define them here, are essentially non-decreasing, intersecting submodular functions which vanish on singletons. 
We prove new geometric embedding results for these diversities. In particular we show that submodular, strongly submodular, and XOS functions can be represented by the {\em generalized circumradius}, a set function in convex analysis equal to the amount a given convex body needs to be stretched to cover a set of points. 
\end{abstract}
 
 \section{Introduction}
  
 A {\em diversity} is a pair $(X,\delta)$ where $X$ is a set and $\delta$ is a real-valued function on finite subsets of $X$ satisfying
 \begin{enumerate}
    \item[(D1)] $\delta(A)=0$ if and only if $|A|\leq 1$;
    \item[(D2)] $\delta(A \cup C) \leq \delta( A \cup B) + \delta(B \cup C)$ if $B$ is nonempty. \footnote{Axioms (D1) and (D2) imply that if $|A| \geq 2$ then $\delta(A) > 0$. This strict inequality causes unnecessary complications, so we will replace (D1) by
\begin{enumerate}
    \item[(D1')] $\delta(A)=0$ if $|A|\leq 1$;
\end{enumerate}
Pairs $(X,\delta)$ satisfying (D1') and (D2), or equivalently, (D1'), (D3), (D4) are called {\em semidiversities}. For conciseness, and following the analogous practice in metric theory, we will use the term {\em diversity} to mean a semidiversity.}
    \end{enumerate}
Written this way, the axioms for diversities look very much like the axioms for a metric space $(X,d)$. Whereas the function $d$ for a metric space describes distances between pairs of objects, $\delta$ describes the spread, range, or diversity, of subsets. Much of the research into diversities has been guided by the idea that diversities are generalized metric spaces \cite{BryantNiesEtal17,BryantNiesEtal21,bryant2024linear,BryantTupper12,BryantTupper14,espinola2014diversities,HaghmaramNourouzi20,Hallback20,jozefiak2023diversity,KirkShahzad14,WuBryantEtal19}. Sometimes the analogies are close, other times the extensions to diversities veer off in new directions. \\

There are alternative sets of axioms for diversities. A pair $(X,\delta)$ is a diversity if and only if it satisfies (D1) together with
 \begin{quote}
\begin{enumerate}
\item[(D3)] $A \subseteq B$ implies $\delta(A) \leq \delta(B)$;
\item[(D4)] $A \cap B \neq \emptyset$ implies $\delta(A \cup B) \leq \delta(A) + \delta(B)$.
\end{enumerate}
\end{quote}
This formulation makes a diversity look a lot less like a generalized metric space and a lot more like an object of combinatorial optimization, perhaps a relative of subadditive or submodular functions.

One of the main goals of this paper is to show how this dual perspective on diversities creates opportunities for arbitrage. We show how results from combinatorial optimization shed light on metric-embedding type problems in diversities, and how geometric representations for diversities provide new perspectives on submodular functions. \\

Let $X$ be a finite set. A function $f:2^X \rightarrow \Re$ is {\em non-decreasing} if $A \subseteq B$ implies $f(A) \leq f(B)$. It is {\em subadditive} if it satisfies
\begin{equation}
f(A \cup B) \leq f(A) + f(B) \label{eq:subadd}
\end{equation}
for all $A,B \subseteq X$. The function $f$ is {\em submodular} if it satisfies 
\begin{equation}
f(A \cup B) + f(A \cap B) \leq f(A) + f(B) \label{eq:submod}
\end{equation} 
for all $A,B \subseteq X$, and {\em strongly submodular} if it satisfies 
\begin{equation}
\sum_{K \subseteq \{1,2,\ldots,m\}} (-1)^{|K|} f\left(A_0 \cup \bigcup_{i \in K} A_i \right) \leq 0\label{def:super_submod}
\end{equation}
for all $m \geq 1$ and $A_0,A_1,\ldots,A_m \in 2^X$. These are all fundamental objects of combinatorial optimization. Submodular functions play the part that convex (and concave!) functions do in continuous optimization \cite{Lovasz1983}, while strongly submodular functions correspond to both  {\em completely alternating} and {\em negative definite} functions in Choquet theory \cite{choquet-1954-capacities,lovasz-2023-submodular-setfunctions} and harmonic analysis \cite{BergChristensenRessel1984}. 

We will encounter another class: {\em XOS functions} \cite{lehmann2006combinatorial}, also known as {\em fractionally subadditive functions} \cite{feige2009maximizing}. These are essentially functions which are pointwise maxima of non-decreasing submodular functions (see below, Proposition~\ref{prop:XOS}). 

Further classes of functions are obtained by only requiring \eqref{eq:subadd}---\eqref{def:super_submod} when $A$ and $B$ intersect. These are the more natural counterparts for diversities: since $\delta$ vanishes on singletons the only {\em fully} submodular diversity is identically zero. We define a function $f:2^X \rightarrow \Re$ to be {\em intersecting subadditive} if it satisfies \eqref{eq:subadd} for all intersecting $A,B \subseteq X$. 
A function $f$ is {\em intersecting submodular} if it satisfies \eqref{eq:submod} for all  $A,B \subseteq X$ which intersect \cite{Frank2011,Fujishige2005,Schrijver2003}. We say that a function $f:2^X \rightarrow \Re$  is {\em strongly intersecting submodular} if it satisfies \eqref{def:super_submod} for all $m \geq 1$ and $A_0,A_1,\ldots,A_m \in 2^X$ such that $A_0 \neq \emptyset$. 

These concepts map directly over to diversities. A pair $(X,\delta)$ such that $\delta(A) = 0$ when $|A| \leq 1$ is a diversity if and only if  $\delta$ is non-decreasing and intersecting subadditive. We say that a diversity $(X,\delta)$ is {\em submodular} if $\delta$ is intersecting submodular. We will show that diversities with $\delta$ equal to a strongly intersecting submodular are exactly the {\em diversities of negative type}, as introduced by \cite{WuBryantEtal19} and discussed below. \\~\\

We will see that, via diversities, we obtain new geometric representations of many of these classical combinatorial optimization functions. Motivated by applications of metric embeddings to hard combinatorial optimization problems, Bryant and Tupper \cite{bryant2024linear} investigated embeddings into diversities on $\Re^k$.  A diversity $(\Re^k,\delta)$ is {\em linear} if $\delta$ satisfies $\delta(\lambda A) = \lambda \delta(A)$ and $\delta(A+B) = \delta(A) + \delta(B)$ for all $\lambda \geq 0$ and nonempty, finite $A,B \subseteq \Re^k$. It is {\em sublinear} if $\delta$ satisfies $\delta(\lambda A) = \lambda \delta(A)$ and $\delta(A+B) \leq \delta(A) + \delta(B)$ for all $\lambda \geq 0$ and nonempty, finite $A,B \subseteq \Re^k$. 

Many important examples of diversities on $\Re^k$ are linear or sublinear \cite{bryant2024linear}. One canonical example is the {\em generalized circumradius}. Let $K$ be a compact convex subset of $\Re^k$ with nonempty interior. The generalized circumradius $R(A,K)$ of a bounded set $A$ with respect to {\em kernel} $K$ is defined by 
\[R(A,K) = \inf \{\lambda \geq 0 : A \subseteq \lambda K + z \mbox{ for some $z \in \Re^k$ } \}.\]
That is, $R(A,K)$ is the minimum amount that we have to scale $K$ so that a translate covers $A$ (Fig.~\ref{fig:Minkowski}). When $K$ is the unit ball, $R(A,K)$ is the circumradius of $A$. The restriction of $R(A,K)$ to finite subsets $A$ is a diversity, called the Minkowski diversity, denoted $(\Re^k,\delta_K)$ \cite{bryant2023diversities}. 
\begin{figure}[htb]
\centerline{\includegraphics[width=0.7\textwidth]{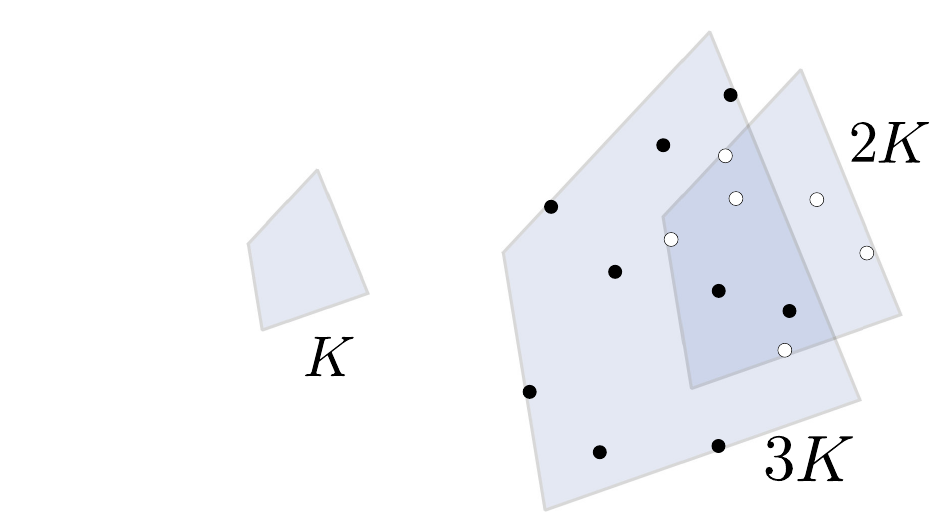}}
\caption{\label{fig:Minkowski} \small An illustration of generalized circumradius. Here $K$ is the kernel, $A$ is the set of filled dots and $B$ the set of hollow dots. In this example, $R(A, K) = 3$ and $R(B,K) = 2$. The set $A$ can be covered by a translate of $3K$ but not by any translate of $\lambda K$ for $\lambda < 3$. Likewise,  $B$ can be covered by a translate of $2K$ but not by any translate of $\lambda K$ for $\lambda < 2$.}
\end{figure}

Bryant and Tupper \cite{bryant2024linear} investigated embeddings into sublinear and linear diversities. An (isometric) embedding of a diversity $(X,\delta_X)$ into a diversity $(Y,\delta_Y)$ is a map $\phi:X \rightarrow Y$ such that $\delta_Y(\phi(A)) = \delta_X(A)$ for all finite $A \subseteq X$. A diversity is  {\em linear (sublinear)-embeddable} if there is an isometric embedding of $(X,\delta)$ into a linear (sublinear) diversity $(\Re^k,\delta)$ for some $k$. It is {\em Minkowski-embeddable} if it can be embedded into $(\Re^k,\delta_K)$ for some $k \geq 1$ and kernel $K \subseteq \Re^k$. We typically constrain $(X,\delta_X)$ to be a finite diversity, that is, one for which $|X|$ is finite. 

There are elegant characterizations of linear and sublinear embeddability which are, in many senses, analogous to Menger's and Schoenberg's characterizations for Euclidean embeddings of metric spaces. 

\begin{theorem} \label{thm:linearChar} (Theorem~9 in \cite{bryant2024linear})
Let $(X,\delta)$ be a finite diversity. The following are equivalent:
\begin{enumerate}
\item $(X,\delta)$ is linear-embeddable.
\item There is an isometric embedding for some $k$ from $(X,\delta)$ into a Minkowski diversity $(\Re^k,\delta_K)$, where $K$ is a $k$-dimensional simplex.
\item $(X,\delta)$ has negative type: for all zero-sum vectors $x$ indexed by nonempty subsets of $X$ we have
\[\sum_{A,B \neq \emptyset} x_A x_B \delta(A \cup B) \leq 0.\]
\end{enumerate}
\end{theorem}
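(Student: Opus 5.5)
Our plan is to prove the cycle 2 $\Rightarrow$ 1 $\Rightarrow$ 3 $\Rightarrow$ 2.

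\emph{(2 $\Rightarrow$ 1).} We only need that a Minkowski diversity with a simplex kernel is linear. For $K=\mathrm{conv}\{v_0,\dots,v_k\}$ there are linear functionals $\ell_0,\dots,\ell_k$ and constants $c_i>0$ with $K=\{x:\ell_i(x)\le c_i\}$ (translate so that the origin lies in the interior). Since $\lambda K+z$ is then described by $\ell_i(x)\le\lambda c_i+\ell_i(z)$, and the facet normals of a simplex satisfy one positive linear relation $\sum_i \mu_i \ell_i=0$ (with $\mu_i>0$), we can solve for $z$ exactly. This yields the closed form
\[\delta_K(A)=\frac{\sum_i \mu_i \max_{a\in A}\ell_i(a)}{\sum_i \mu_i c_i}.\]
Each term $\max_{a\in A}\ell_i(a)$ is positively homogeneous and additive under Minkowski sums. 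Hence $\delta_K$ is linear.

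\emph{(1 $\Rightarrow$ 3).} Negative type is preserved under isometric embedding, so it suffices to show that a linear diversity on $\Re^k$ has negative type. Linearity gives $\delta(A\cup B)\le\delta(A)+\delta(B)$ fails in general, so we argue through the representation instead. We expect every linear diversity to be of the form $\delta(A)=\int \max_{a\in A}\langle u,a\rangle\,d\nu(u)$, i.e.\ a nonnegative combination of support functions, up to a linear term that vanishes on singletons. For a single functional $h(A)=\max_{a\in A}\ell(a)$ we have $h(A\cup B)=\max(h(A),h(B))$. Taking a zero-sum $x$, we write $\max(s,t)=\tfrac12(s+t)+\tfrac12|s-t|$. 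The additive part drops out because $\sum x_A=0$, leaving $\tfrac12\sum x_Ax_B|h(A)-h(B)|\le 0$, since $|\cdot|$ on $\Re$ is a negative-type metric. Summing over the mixture gives (3). To justify the representation, we restrict $\delta$ to the finitely many points of the image; a linear diversity is determined by a sublinear functional on convex hulls.

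\emph{(3 $\Rightarrow$ 2).} This is the hard direction. We first convert negative type into strong intersecting submodularity, and then into a nonnegative expansion of $\delta$ over ``max'' functions. Concretely, we aim to show that a negative-type diversity can be written as
\[\delta(A)=\sum_{\emptyset\ne C\subseteq X} w_C\,\big(1-[A\cap C=\emptyset]\big)+\text{linear terms},\]
that is, as $\sum_C w_C\,\mathbf 1[A\cap C\ne\emptyset]$ corrected to vanish on singletons, with $w_C\ge0$ for $|C|\ge 2$. We would obtain this by Möbius inversion of $A\mapsto\delta(X\setminus A)$. Positivity of the coefficients is exactly the alternating inequality~\eqref{def:super_submod}, and testing the quadratic form in (3) on vectors $x=\sum_K(-1)^{|K|}e_{A_0\cup\bigcup_K A_i}$ style combinations is what yields that inequality.

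Each term $\mathbf 1[A\cap C\ne\emptyset]$ minus singleton corrections is a ``cut'' diversity. Each such cut diversity embeds as a coordinate functional $\max_{a\in A}\ell_C(a)$ on $\Re$, with $\ell_C$ equal to $1$ on $C$ and $0$ off $C$, suitably normalized. Stacking these coordinates gives $\delta(A)=\sum_C w_C\max_{a\in A}\phi_C(a)-(\text{affine part})$. After adding one extra coordinate to absorb the normalization, this is precisely the closed form of the simplex Minkowski diversity from the first step, with $\mu_C=w_C$.

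The main obstacle is the positivity step: extracting nonnegativity of the Möbius coefficients from the negative-type quadratic inequality. The quadratic form only involves $\delta$ on unions $A\cup B$, so we will need to choose $x$ carefully, supported on the $2^m$ sets $A_0\cup\bigcup_{i\in K}A_i$ with signs $(-1)^{|K|}$ and split into two factors. We would try the tensor-product choice first.
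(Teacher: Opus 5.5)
The paper gives no proof of this theorem. It is quoted from \cite{bryant2024linear}, and the nearby in-paper results (Propositions~\ref{prop:charNegBonus} and~\ref{prop:neg_is_strongsubmod}) import the key M\"obius positivity from \cite{WuBryantEtal19}. So your proposal has to be judged on its own.

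Your $2\Rightarrow1$ is complete: feasibility of $z$ reduces to $\sum_i\mu_i(h_i-\lambda c_i)\le0$, because $z\mapsto(\ell_i(z))_i$ maps onto the hyperplane $\sum_i\mu_iy_i=0$.

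\textbf{The genuine gap is $1\Rightarrow3$.} The representation $\delta(A)=\int\max_{a\in A}\langle u,a\rangle\,d\nu(u)$ with $\nu\ge0$ carries the whole implication, and you only say you expect it. The remark that a linear diversity is determined by a sublinear functional on convex hulls does not produce a nonnegative measure. The representation is true, but proving it takes two real steps. First, show that $\delta(A)$ depends only on $\mathrm{conv}\,A$: rational convex combinations follow from $N(A\cup\{b\})\subseteq A+\cdots+A$ ($N$-fold sum), and irrational ones from continuity via the seminorm $v\mapsto\delta(\{0,v\})$ and (D4). Second, extend the resulting monotone Minkowski-additive functional to a positive functional on $C(S^{k-1})$, using the M.~Riesz extension theorem and then Riesz--Markov.

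A cheaper, finite repair generalizes $(A\cup B)+(A\cap B)\subseteq A+B$. With $S_K=A_0\cup\bigcup_{i\in K}A_i$ and $A_0\neq\emptyset$,
\[\sum_{|K|\ \mathrm{even}}S_K\;\subseteq\;\sum_{|K|\ \mathrm{odd}}S_K .\]
Given $u_K\in S_K$ for each even $K$, Hall's theorem yields a bijection $K\mapsto L$ onto the odd sets with $u_K\in S_L$. Hall's condition reduces to the count that, for nonempty $I\subseteq[m]$, no more even than odd subsets of $[m]$ meet $I$. Monotonicity and additivity then make every linear diversity strongly intersecting submodular. After that, your M\"obius step and your identity $\max(s,t)=\tfrac12(s+t)+\tfrac12|s-t|$ give negative type. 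The same identity applied to your simplex closed form also gives $2\Rightarrow3$ directly.

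\textbf{In $3\Rightarrow2$, the step you call the main obstacle closes at once,} but not by splitting into two different factors: $\sum_{A,B}x_Ay_B\,\delta(A\cup B)$ with $x\neq y$ has no sign. Instead put the single vector $x=\sum_{K\subseteq[m]}(-1)^{|K|}e_{S_K}$ on both sides.
\begin{itemize}
\item $S_K\cup S_L=S_{K\cup L}$.
\item $\sum_{K\cup L=M}(-1)^{|K|+|L|}=(-1)^{|M|}$, since each $j\in M$ lies in $K$ only, $L$ only, or both, contributing $-1-1+1$.
\item Hence the quadratic form equals $\sum_M(-1)^{|M|}\delta(S_M)$.
\item $x$ is zero-sum and supported on nonempty sets when $m\ge1$ and $A_0\neq\emptyset$.
\end{itemize}
Taking $A_0=A$ and the $A_i$ to be the singletons of $X\setminus A$ gives $\lambda[\delta]_A\ge0$ for $A\neq\emptyset,X$.

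Your description of the resulting expansion also needs fixing. Set $w_C=\lambda[\delta]_{X\setminus C}\ge0$ for every $C\neq\emptyset,X$, singletons included. Then $\delta(A)=\sum_Cw_C[A\cap C\neq\emptyset]-W$ for nonempty $A$, where $W=\sum_{C\ni x}w_C$ is the same for all $x$ precisely because $\delta(\{x\})=0$. That independence is what makes the extra functional $\ell_0=-\sum_Cw_C\ell_C$ constant on $\phi(X)$. After discarding the $C$ with $w_C=0$, your closed form with $\mu_C=w_C$, $\mu_0=1$ then reproduces $\delta$. There are no ``linear terms'' in this expansion. The individual terms are also not cut diversities: they pair into cuts only when $w_C=w_{X\setminus C}$, which is the $\ell_1$ case.
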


\begin{theorem} \label{thm:sublinearChar} (Theorem~10 in \cite{bryant2024linear} ) 
Let $(X,\delta)$ be a finite diversity. The following are equivalent:
\begin{enumerate}
\item $(X,\delta)$ is sublinear-embeddable.
\item There is an isometric embedding for some $k$ from $(X,\delta)$ into a Minkowski diversity $(\Re^k,\delta_K)$, for some convex compact subset $K$ of $\Re^k$ with nonempty interior.
\item There is a finite collection $(X,\delta_1)$, $(X,\delta_2),\ldots,(X,\delta_N)$ of negative type diversities such that 
\[ \delta(A) = \max\{\delta_1(A),\delta_2(A),\ldots,\delta_N(A)\}\]
for all $A \subseteq X$.
\end{enumerate}
\end{theorem}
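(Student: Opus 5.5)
The plan is to prove the cycle $1 \Rightarrow 3 \Rightarrow 2 \Rightarrow 1$, leaning on Theorem~\ref{thm:linearChar} to handle negative type diversities.

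For $2 \Rightarrow 1$, it suffices to check that every Minkowski diversity $(\Re^k,\delta_K)$ is sublinear. Homogeneity is immediate: $A \subseteq \lambda K + z$ if and only if $\mu A \subseteq \mu\lambda K + \mu z$. For subadditivity, suppose $A \subseteq \lambda K + z$ and $B \subseteq \lambda' K + z'$. Then
\[ A+B \subseteq \lambda K + \lambda' K + z + z' = (\lambda+\lambda')K + (z+z'), \]
where the equality uses convexity of $K$. Taking infima gives $\delta_K(A+B) \le \delta_K(A)+\delta_K(B)$.

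For $1 \Rightarrow 3$, let $\phi: X \to \Re^k$ embed $(X,\delta)$ in a sublinear diversity $\delta'$.
\begin{enumerate}
\item Restrict $\delta'$ to finite sets $A$ with convex hull $\mathrm{conv}(A)$. A sublinear, monotone, translation-invariant set function is then a support-type function. Concretely, $\delta'(A)$ depends only on $\mathrm{conv}(A)$; the key step is that $\delta'(A \cup \{\bar a\}) = \delta'(A)$ for $\bar a$ in the hull. This follows from the sublinearity inequality $\delta'(\frac12 A + \frac12 A) \le \delta'(A)$ together with monotonicity, since $\frac12 A + \frac12 A \supseteq A$ and contains midpoints, iterated.
\item Viewed as a function of the polytope, $\delta'$ is Minkowski-subadditive and positively homogeneous. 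Such a function is a maximum of Minkowski-additive, homogeneous functionals dominated by it; this is a Hahn--Banach argument in the cone of polytopes, embedded in the vector space of support functions.
\item Since $X$ is finite, only finitely many sets $\phi(A)$ matter. Each of them is attained by some linear diversity $\delta_i \le \delta'$ with $\delta_i(\phi(A)) = \delta'(\phi(A))$.
\item Pulling back through $\phi$ gives linear-embeddable diversities, which have negative type by Theorem~\ref{thm:linearChar}, and $\delta = \max_i \delta_i \circ \phi$.
\end{enumerate}
One must also check that each pulled-back $\delta_i$ is a (semi)diversity. It should vanish on singletons and be monotone, which I would arrange by translation invariance and by intersecting with the cone of monotone functionals. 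This Hahn--Banach step is the main obstacle: the linear functionals must themselves be monotone diversities. I would try first to take them of the form $A \mapsto \sum_j (\max_{a\in A} \langle u_j,a\rangle - \min_{a \in A}\langle u_j, a\rangle)$-type combinations, i.e.\ mixed support-function differences.

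For $3 \Rightarrow 2$, apply Theorem~\ref{thm:linearChar} to each $\delta_i$. This gives embeddings $\phi_i: X \to \Re^{k_i}$ into Minkowski diversities with simplex kernels $K_i$. Define $\phi = (\phi_1,\dots,\phi_N)$ into $\Re^{k_1+\cdots+k_N}$ and set $K = K_1 \times \cdots \times K_N$, which is convex and compact with nonempty interior. Covering a product set by a translate of $\lambda K$ amounts to covering each coordinate projection by a translate of $\lambda K_i$. Hence
\[ \delta_K(\phi(A)) = \max_i \delta_{K_i}(\phi_i(A)) = \max_i \delta_i(A) = \delta(A), \]
which completes the cycle.
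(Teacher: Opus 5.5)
The paper gives no proof of this statement; it imports it from \cite{bryant2024linear}. Your $2\Rightarrow 1$ and $3\Rightarrow 2$ are sound: the product kernel works because $\lambda K+z=\prod_i(\lambda K_i+z_i)$. But $1\Rightarrow 3$ has a genuine gap, exactly at the step you flag. The Hahn--Banach theorem needs a sublinear functional on a whole vector space, whereas $\delta'$ lives only on the cone of support functions $h_A$. Moreover, an additive, homogeneous minorant of $\delta'$ on that cone which attains $\delta'(T)$ need not be a diversity. For instance, in $\Re^2$ with $\delta'=w_{e_1}$ (width in direction $e_1$), the functional $w_{e_1}-\varepsilon w_{e_2}$ is dominated by $\delta'$, attains it on horizontal segments and vanishes on points, yet is negative on vertical segments. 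Saying you would ``intersect with the cone of monotone functionals'' does not show that a \emph{monotone} minorant attaining $\delta'(T)$ exists. Your fallback of sums of widths cannot work either. Such functionals satisfy $\ell(-A)=\ell(A)$, so if $V$ is the vertex set of a triangle $\Delta$ and $\ell\le\delta_\Delta$, then $\ell(-V)=\ell(V)\le\delta_\Delta(V)=1<2=\delta_\Delta(-V)$.

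The missing idea is to extend $\delta'$ off the cone in a way that uses its monotonicity. For $f\in C(\mathbb{S}^{k-1})$ set $\tilde p(f)=\inf\{\delta'(A): A\subseteq\Re^k \mbox{ finite, nonempty},\ h_A\ge f\}$. This is finite (the vertex set of a large cube dominates any $f$), nonnegative, and sublinear because $h_{A+B}=h_A+h_B$. Moreover $\tilde p(h_T)=\delta'(T)$: if $h_A\ge h_T$ then $T\subseteq\mathrm{conv}(A)$, so $\delta'(T)\le\delta'(A\cup T)=\delta'(A)$ by your step 1. Hahn--Banach now gives a linear $L\le\tilde p$ with $L(h_T)=\delta'(T)$, and the properties you were worried about come for free:
\begin{itemize}
\item if $f\le 0$ then $L(f)\le\tilde p(f)\le\delta'(\{0\})=0$, so $\ell(A)=L(h_A)$ is monotone;
\item $L(\pm\langle z,\cdot\rangle)\le\delta'(\{\pm z\})=0$ gives $\ell(\{z\})=0$;
\item $\ell\le\delta'$;
\item the triangle inequality follows from $A\cup C\subseteq(A\cup\{b\})+(C\cup\{b\})+\{-b\}$.
\end{itemize}
Your steps 3 and 4 then go through. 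One smaller repair is needed in step 1: iterated midpoints only reach dyadic convex combinations. Use instead $A\cup\{\sum_i\lambda_i a_i\}\subseteq\sum_i\lambda_i A$, whose value is at most $\sum_i\lambda_i\delta'(A)=\delta'(A)$ for arbitrary real weights.
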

~\\

\subsection*{Outline}

We outline the structure of the paper and the main results.

Section~2 begins with a concise introduction to diversities and diversity theory, before reviewing results on submodular functions, strongly submodular functions and diversities of negative type. 

Section~3 focuses on the relationship between strongly submodular functions and diversities of negative type. We show that diversities $(X,\delta)$ of negative type are exactly those for which the functions $\delta_x:2^{X \setminus \{x\}} \rightarrow \Re$ are strongly submodular for all $x$, a connection which leads to further characterizations. We present two ways to construct a diversity of negative type from a strongly submodular function. 

Section~4 introduces the class of {\em submodular diversities}, which has the same relationship to submodular functions as diversities of negative type have to strongly submodular functions. We show that the construction in Section~3 of a diversity from a strongly submodular function extends to the submodular case. The main result of this section, and of the paper, is Theorem~\ref{thm:embed_submodular}, which shows that submodular diversities are sublinear-embeddable. Most of the work in proving this result is in Theorem~\ref{thm:submodular_match}. 

We note that the proofs of Theorem~\ref{thm:submodular_match} and Theorem~\ref{thm:embed_submodular} are not based on the standard polymatroid constructions of submodular function theory. They explore a different polytope: that formed from the set of negative type diversities dominated by a given submodular diversity. The proof of sublinearity is based on the fact that, for each subset, we can write $\delta(A)$ as the maximum of $\eta(A)$ for diversities in this polytope, analogous to the relationship between submodular functions and additive functions in the corresponding polymatroid. This construction may well be useful when proving other results about intersecting submodular functions and their close relatives. 

Section~5 builds on the embedding results in Section~4 but gives a complete characterization: a diversity is sublinear-embeddable if and only if it is the maximum of submodular diversities. This moves the focus from submodular functions to XOS functions, though we show that the attractive pairing of classes of diversities and classes of set functions does not quite extend to XOS functions. 

Section~6 takes the results about diversities and their embeddings and maps them back to results on submodular functions and the associated classes. We connect submodular functions to the {\em generalized circumradius}, a geometric connection which we believe is quite new. Strongly submodular functions provide a special case, while XOS functions (that is, maxima of submodular functions) are exactly those for which this representation is possible.

\section{Background}

\subsection{A brief introduction to diversities and diversity theory}

The most natural way to think about diversities has been that they are a generalization of metric spaces where we assign values to (finite) subsets instead of just pairs. One of the simplest but most fundamental diversities is {\em diameter} in a metric space $(X,d)$:
\[ \deltadiam(A) = \max\{d(a,b):a,b  \in A \},\]
for nonempty $A$ and $\deltadiam(\emptyset)=0$.
We see that $\deltadiam$ vanishes exactly on singletons and the empty set, it is non-decreasing, and (D4) follows by a straightforward case-by-case analysis.  The diameter diversity $(X,\deltadiam)$  is extremal: if $(X,\delta)$ is any other diversity with induced metric $(X,d)$ then  $\delta(A) \geq \deltadiam(A)$ for all finite $A \subseteq X$. 

Historically, the concept of diversities grew out of tight span theory, particularly the connection with phylogenetics. Consider a phylogenetic tree with branch lengths (representing, for example, expected number of substitutions per site) and some vertices labelled by elements of a set of {\em taxa} $X$. Given $x,y \in X$ the {\em additive distance} $d_T(x,y)$ is the sum of branch lengths along the unique path in the tree connecting them. Dress \cite{dress1984trees} showed that the tree (as a continuous space) could be reconstructed from $(X,d_T)$ using the {\em tight span}. 

Additive distances extend naturally to a diversity. For a subset $A \subseteq X$ we define the {\em phylogenetic diversity} $(X,\delta_T)$ whereby $\delta_T(A)$ is the sum of the branch lengths for branches in the smallest connected subtree containing $A$, see Figure~\ref{fig:phylo}. The concept of phylogenetic diversity was introduced by \cite{Faith92}. Bryant and Tupper extended the definition of the tight span to diversities, showing that (as with additive distances) the tight span of a phylogenetic diversity was a continuous space given by the underlying phylogeny \cite{BryantTupper12}. 

\begin{figure}[ht]
\begin{center}
\includegraphics[width=0.8\textwidth]{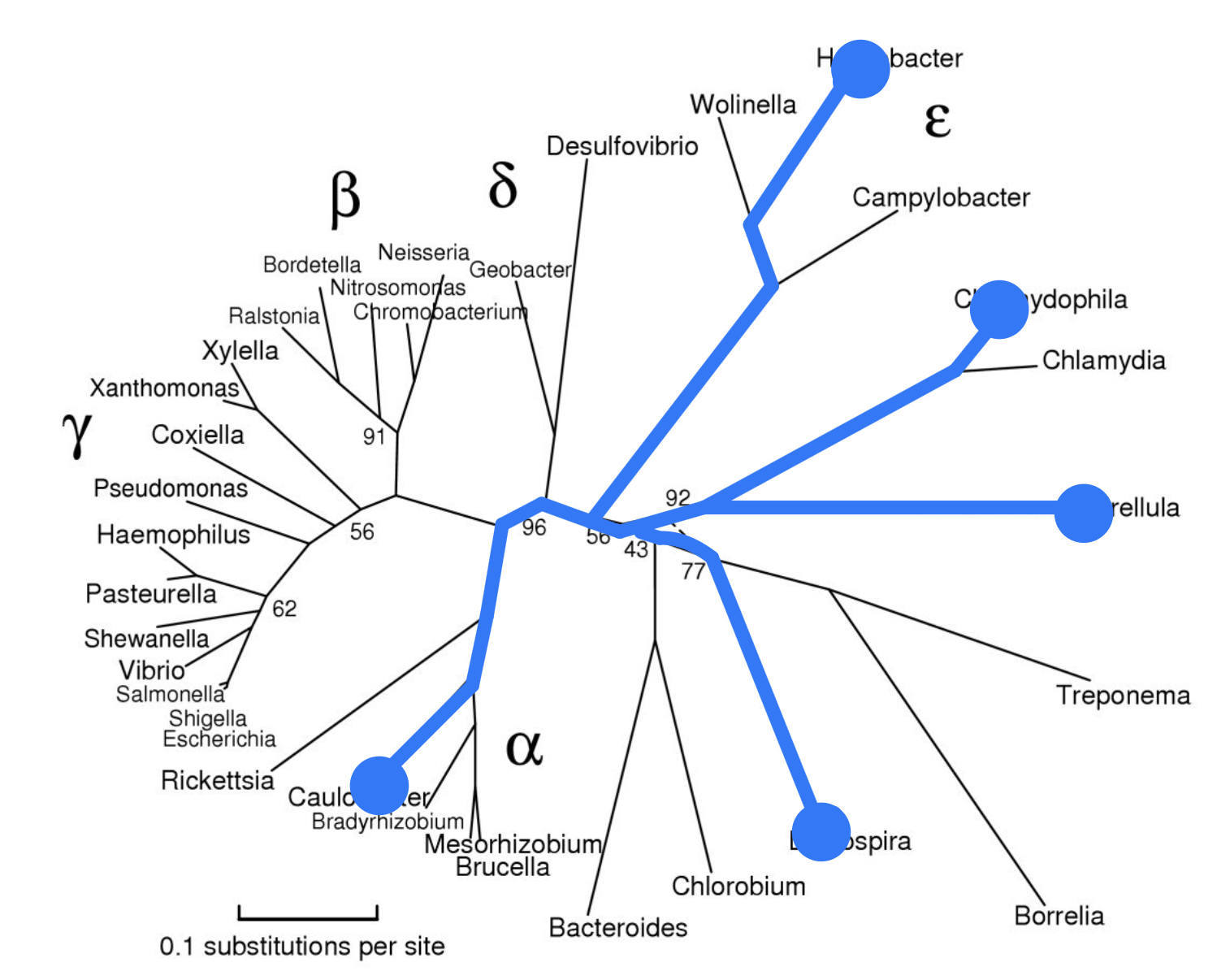}
\end{center}
\caption{\label{fig:phylo} \small An illustration of phylogenetic diversity, as introduced by \cite{Faith92}. The phylogenetic diversity of a set of taxa (in this case a group of bacteria) is defined to be the sum of the (highlighted) branch lengths on the smallest subtree connecting them.}
\end{figure}

Additive distances are textbook examples of $\ell_1$ embeddable metrics, meaning that there is an isometry from $(X,d_T)$ to $(\Re^k,d_1)$ for some $k \geq 1$, where
\[ d_1(x,y) = \sum_{i=1}^k |x_i - y_i|.\]
The diversity analogue of the $\ell_1$ metric is the $\ell_1$ diversity $(\Re^k,\delta_1)$ given by
\[ \delta_1(A) = \sum_{i=1}^k \max\{|a_i - b_i|: a,b \in A\}.\]
Every phylogenetic diversity can be isometrically embedded into an $\ell_1$ diversity, though $\ell_1$-embeddable diversities are much more general. Indeed the whole machinery of $\ell_1$-metric embeddings extends to diversities  \cite{BryantTupper14}, at least in principle. \\

Other examples of diversities arise in classical combinatorial optimization problems, including the length of a minimum-length travelling-salesperson tour through a set of points, the length of the optimal Steiner tree connecting a set of points, and the length (sum of generator lengths) of a minimal zonotope containing a set of points, see \cite{BryantTupper12,BryantTupper14,bryant2023diversities,bryant2024linear}. \\

Convex analysis provides other examples of diversities. The {\em Minkowski sum} of two sets $A,B$ in a vector space is defined by $A+B =\{a+b: a \in  A,\, b\in B\}$. The Minkowski sum satisfies
\[(A \cup B) + (A \cap B) \subseteq A + B\]
for all $A,B$. For $\lambda \in \Re$ we write $\lambda A = \{\lambda a: a \in A\}$. A function $f$ defined on nonempty subsets of $\Re^k$  is {\em Minkowski linear} if it satisfies 
\begin{align}
f( \lambda A) & = \lambda f(A)  \label{eq:fhomog} \\
f(A + B) & = f(A) + f(B) \label{eq:fadd}
\end{align}
for all $\lambda \geq 0$ and nonempty $A,B \subseteq \Re^k$. We say that $f$ is {\em Minkowski sublinear} if it satisfies \eqref{eq:fhomog} together with 
\begin{align}
f(A + B) & \leq f(A) + f(B) \label{eq:fsubadd}
\end{align}
for all nonempty $A,B \subseteq \Re^k$. Linear and sublinear functions appear frequently in the Minkowski theory of valuations (see, e.g., \cite{Schneider14}).  We say that a diversity $(\Re^k,\delta)$ is linear, as defined above, if $\delta$ is Minkowski linear; it is sublinear if $\delta$ is Minkowski sublinear. \\

The $\ell_1$ diversity $(\Re^k,\delta_1)$ is linear \cite{bryant2024linear}. Another linear diversity appearing in convex analysis is {\em mean width}. Let $u$ be a direction vector of unit length. The {\em width} of a set $A$ in direction $u$ is then given by 
\[\max\{u^Ta:a \in A\} - \min\{u^Tb: b \in A\} = \max\{u^T(a-b): a,b \in A\} .\]
The {\em mean width} of $A$ is the average or expected width of $A$ if we pick $u$ uniformly at random. Formally, the mean width diversity is defined by $\delta_w(\emptyset) = 0$ and
\[\delta_w(A) = 
\frac{2}{\omega_k} \int_{\mathbb{S}^{k-1}} \sup\{a^T x : a \in A \}  \, \mathrm{d} \nu(x) \]
for finite, nonempty $A \subseteq \Re^k$. Here, $\nu$ is the (uniform) rotation-invariant measure on the sphere and the constant 
$\omega_k$ is chosen so that $\delta_w(\{a,b\}) = \|a-b\|_2$ for all $a,b \in \Re^k$.\\

The diameter diversity $(\Re^k,\deltadiam)$ is sublinear. The Minkowski diversity 
\[\delta_K(A) = \inf\{\lambda \geq 0: A \subseteq \lambda K + z \mbox{ for some $z \in \Re^k$}\},\]
defined above, is a canonical example of a sublinear diversity. Other examples of sublinear diversities are discussed in \cite{bryant2024linear}. \\

To summarize, we have a nested chain of diversity classes:
\[\mbox{ Phylogenetic diversities}  \subsetneq \mbox{$\ell_1$-embeddable} \subsetneq \mbox{linear-embeddable}  \subsetneq \mbox{sublinear-embeddable.} \]
By Theorem~\ref{thm:linearChar}, linear-embeddable diversities are exactly the negative-type diversities, which are exactly the diversities which are Minkowski-embeddable with a simplex kernel $K$. By Theorem~\ref{thm:sublinearChar} finite sublinear-embeddable diversities are exactly the Minkowski-embeddable diversities. We will be fitting submodular diversities into this hierarchy.

\subsection{Properties of submodular and strongly submodular functions}

Recall that a function $f:2^X \rightarrow \Re$ is submodular if it satisfies \eqref{eq:submod} for  all $A,B \subseteq X$. Here we collect facts about submodular functions, all of which can be found in  \cite{lovasz-2023-submodular-setfunctions}, see also reviews in \cite{Frank2011,Fujishige2005,Lovasz1983,Schrijver2003}. 

\begin{proposition} \label{prop:basic_submodular}
\begin{enumerate}
\item If $f_1,f_2$ are submodular and $\alpha \geq 0$ then $\alpha f_1$ and $f_1 + f_2$ are submodular. 
\item If $f:2^X \rightarrow \Re$ is submodular and non-decreasing and $\tau \in \Re$ then the truncation $\min\{f(A),\tau\}$ is also submodular. 
\item A function $f$ is non-decreasing and submodular  if and only if
\begin{equation}
f(A_0) - f(A_0 \cup A_1) - f(A_0 \cup A_2) + f(A_0 \cup A_1 \cup A_2) \leq 0 \label{eq:submod2}
\end{equation}
for all $A_0,A_1,A_2 \in 2^X$. 
\item If $\Gamma:2^X \rightarrow 2^Y$ satisfies $\Gamma(A \cup B) = \Gamma(A) \cup \Gamma(B)$ for all $A,B \in 2^X$ and $g:2^Y \rightarrow \Re$ is non-decreasing and submodular then $f:2^X \rightarrow \Re$ given by $f(A) = g(\Gamma(A))$ for all $A \subseteq X$ is non-decreasing and submodular.  
\end{enumerate}
\end{proposition}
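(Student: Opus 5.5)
The four parts can be handled one at a time, each by direct verification from the definition \eqref{eq:submod}.

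Part 1 is immediate. Inequality \eqref{eq:submod} is preserved under multiplication by $\alpha\ge 0$, and summing the inequalities for $f_1$ and $f_2$ gives it for $f_1+f_2$.

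For part 2, set $g(A)=\min\{f(A),\tau\}$ and fix $A,B$. Monotonicity of $f$ gives $f(A\cap B)\le f(A),f(B)\le f(A\cup B)$. Write $a=f(A)$, $b=f(B)$, $u=f(A\cup B)$, $i=f(A\cap B)$, so $i\le \min(a,b)$, $\max(a,b)\le u$ and $u+i\le a+b$. We need
\[
\min(u,\tau)+\min(i,\tau)\le \min(a,\tau)+\min(b,\tau),
\]
and we split on the position of $\tau$.
\begin{itemize}
\item If $\tau\ge u$, nothing is truncated and the claim is submodularity of $f$.
\item If $\tau\le i$, both sides equal $2\tau$.
\item If $i<\tau<u$, the left side is $\tau+i$. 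Without loss of generality $a\le b$.
  \begin{itemize}
  \item If $\tau\le a$, the right side is $2\tau\ge\tau+i$.
  \item If $a<\tau\le b$, the right side is $a+\tau\ge i+\tau$.
  \item If $b<\tau$, the right side is $a+b\ge u+i>\tau+i$.
  \end{itemize}
\end{itemize}
This case split is the only mildly fiddly step, and it is routine.

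For part 3, first suppose $f$ is non-decreasing and submodular. Apply \eqref{eq:submod} to $A=A_0\cup A_1$ and $B=A_0\cup A_2$. Their union is $A_0\cup A_1\cup A_2$, and their intersection $C$ contains $A_0$. Then
\[
f(A_0\cup A_1\cup A_2)+f(A_0)\le f(A_0\cup A_1\cup A_2)+f(C)\le f(A_0\cup A_1)+f(A_0\cup A_2),
\]
which is \eqref{eq:submod2}.

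Conversely, assume \eqref{eq:submod2}. Taking $A_2=\emptyset$ gives $f(A_0)-f(A_0\cup A_1)-f(A_0)+f(A_0\cup A_1)=0$, so that yields nothing. Instead take $A_1=A_2$: this gives $f(A_0)-2f(A_0\cup A_1)+f(A_0\cup A_1)\le 0$, i.e. $f(A_0)\le f(A_0\cup A_1)$, which is monotonicity. For submodularity, take $A_0=A\cap B$, $A_1=A$, $A_2=B$; \eqref{eq:submod2} is then exactly \eqref{eq:submod}.

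For part 4, monotonicity comes first. If $A\subseteq B$, then $\Gamma(B)=\Gamma(A)\cup\Gamma(B)\supseteq\Gamma(A)$, so $f(A)\le f(B)$ because $g$ is non-decreasing. We then use the characterization in part 3 applied to $g$. Put $C_i=\Gamma(A_i)$; union-preservation gives $\Gamma(A_0\cup A_1)=C_0\cup C_1$ and so on. Inequality \eqref{eq:submod2} for $f$ at $(A_0,A_1,A_2)$ is then exactly \eqref{eq:submod2} for $g$ at $(C_0,C_1,C_2)$, which holds. Part 3 then gives that $f$ is non-decreasing and submodular.

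This route avoids $\Gamma(A\cap B)$, which is the point of using part 3: $\Gamma$ need not preserve intersections, so a direct check of \eqref{eq:submod} for $f$ would fail.
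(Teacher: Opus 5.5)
Your proof is correct in all four parts. The paper gives no proof of this proposition. It collects these facts and refers to Lov\'asz's notes for them, so your argument is a self-contained replacement for that citation rather than an alternative to an argument in the text.

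The three ranges of $\tau$ in your truncation case split cover all of $\Re$, and each subcase uses the right one of $i\le\min(a,b)$, $\max(a,b)\le u$, $u+i\le a+b$. The choices $A_1=A_2$ and $(A_0,A_1,A_2)=(A\cap B,A,B)$ cleanly give the converse in part 3.

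Two small remarks on part 4:
\begin{itemize}
\item Your separate monotonicity check is redundant. Once you have \eqref{eq:submod2} for $f$, the converse direction of part 3 already gives monotonicity.
\item Your closing comment overstates the difficulty: a direct check of \eqref{eq:submod} for $f$ does work. The monotonicity of $\Gamma$ that you proved gives $\Gamma(A\cap B)\subseteq\Gamma(A)\cap\Gamma(B)$. Hence $f(A\cup B)+f(A\cap B)\le g(\Gamma(A)\cup\Gamma(B))+g(\Gamma(A)\cap\Gamma(B))\le f(A)+f(B)$, using first that $g$ is non-decreasing and then that $g$ is submodular.
\end{itemize}
Going through part 3 is a tidy packaging of this same use of monotonicity. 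Only a naive check that assumes $\Gamma$ preserves intersections would fail.
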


Strongly submodular functions are the subclass of submodular functions which satisfy 
\begin{equation}
\sum_{K \subseteq \{1,2,\ldots,m\}} (-1)^{|K|} f\left(A_0 \cup \bigcup_{i \in K} A_i \right) \leq 0
\end{equation}
for all $m \geq 1$ and $A_0,A_1,\ldots,A_m \in 2^X$. They were named {\em strongly submodular} by Lov\'asz \cite{lovasz-2023-submodular-setfunctions}; Choquet \cite[Chap.~3]{choquet-1954-capacities} called them {\em alternating functions}, albeit with equivalent but slightly different notation. These functions also arise in \cite{BergChristensenRessel1984} defined on the semigroup $(2^X,\cup)$, where it is shown that, on this particular semigroup, they are exactly the {\em negative definite functions}. 

Again we collect results from \cite{lovasz-2023-submodular-setfunctions} for later use. 

\begin{proposition} \label{prop:basic_strong}
Let $X$ be a finite set.
\begin{enumerate}[label=(\roman*)]
\item For all $\alpha \in \Re$ the constant function $f(A) = \alpha$ is strongly submodular.
\item If $f_1,f_2$ are strongly submodular and $\alpha \geq 0$ then $\alpha f_1$ and $f_1 + f_2$ are strongly submodular.
\item Every strongly submodular function $f:2^X \rightarrow \Re$  is submodular.
\item Every strongly submodular function $f:2^X \rightarrow \Re$ is non-decreasing. 
\item If $\Gamma:2^X \rightarrow 2^Y$ satisfies $\Gamma(A \cup B) = \Gamma(A) \cup \Gamma(B)$ for all $A,B \in 2^X$ and $g:2^Y \rightarrow \Re$ is strongly submodular then $f:2^X \rightarrow \Re$ given by $f(A) = g(\Gamma(A))$ for all $A \subseteq X$ is strongly submodular. 
\item If $w_x \geq 0$ for all $x \in X$ then the function $f(A) = \sum_{a \in A} w_a$ is strongly submodular. 
\item Suppose $f:2^X \rightarrow \Re$ is zero on the empty set. Then $f$ is strongly submodular if and only if 
\begin{equation}
\sum_{B:A \subseteq B} (-1)^{|B \setminus A|} f(B) \leq 0 \label{eq:strong_moebius}
\end{equation}
for all $A \subsetneq X$.  
\item If $f(\emptyset) = 0$ then $f$ is strongly submodular if and only if there are subsets $A_1,\ldots,A_m$ and non-negative weights $w_1,\ldots,w_m$ such that $f(A) = \sum_{i:A \cap A_i \neq \emptyset} w_i$. 
\item If $w_x \geq 0$ for all $x \in X$ then the function $f:2^X \rightarrow \Re$ with $f(\emptyset) = 0$ and $f(A) = \max\{w_a:a \in A\}$ for nonempty $A \subseteq X$ is strongly submodular. 
\item If $\omega_x \in \Re$ for all $x \in X$ then the function $g:2^X \rightarrow \Re$ with $g(A) = \max\{\omega_a:a \in A\}$ for nonempty $A \subseteq X$ and 
$g(\emptyset) \leq  \min\{\omega_a:a \in X\}$  is strongly submodular. 
\end{enumerate}
\end{proposition}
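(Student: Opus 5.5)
The proof splits into the formal items (i)–(vi), which follow directly from the definition, and the structural items (vii)–(x), which rest on a Möbius-inversion representation. The key steps are the sign computation for a single coverage term and the inversion in (viii); I would treat (vii) and (viii) together.

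\emph{The formal items.}
For (i), the left-hand side of \eqref{def:super_submod} for a constant $\alpha$ is $\alpha\sum_{K\subseteq\{1,\ldots,m\}}(-1)^{|K|}=0$ when $m\ge 1$.

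Item (ii) holds because \eqref{def:super_submod} is linear in $f$.

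For (v), use $\Gamma(A_0\cup\bigcup_{i\in K}A_i)=\Gamma(A_0)\cup\bigcup_{i\in K}\Gamma(A_i)$. Each inequality for $f$ is then the inequality for $g$ with the sets $\Gamma(A_0),\ldots,\Gamma(A_m)$.

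For (iv), take $m=1$, which gives $f(A_0)\le f(A_0\cup A_1)$.

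For (iii), take $m=2$, $A_0=A\cap B$, $A_1=A$, $A_2=B$. This yields $f(A\cap B)-f(A)-f(B)+f(A\cup B)\le 0$, which is \eqref{eq:submod}.

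\emph{Coverage functions are strongly submodular.}
Next I would show that the single term $c_S(A)=[A\cap S\neq\emptyset]$ is strongly submodular, for any fixed $S$. Write $c_S=1-[A\subseteq X\setminus S]$. The indicator of disjointness from $S$ is multiplicative over unions. Hence
\[\sum_K(-1)^{|K|}c_S\Bigl(A_0\cup\bigcup_{i\in K}A_i\Bigr)=-[A_0\cap S=\emptyset]\prod_{i=1}^m\bigl(1-[A_i\cap S=\emptyset]\bigr)\le 0,\]
where the constant part vanishes by (i). Combined with (ii), every function $\sum_i w_i[A\cap A_i\neq\emptyset]$ with $w_i\ge 0$ is strongly submodular. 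This gives the "if" direction of (viii), and (vi) follows with $A_i=\{x\}$.

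\emph{Items (vii) and (viii).}
For the "only if" direction of (vii), take $A_0=A$ and let $A_1,\ldots,A_m$ be the singletons of $X\setminus A$. Then \eqref{def:super_submod} becomes exactly \eqref{eq:strong_moebius}.

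For the converse I would pass through (viii). A coverage representation means $f(X)-f(A)=\sum_{\emptyset\neq B\subseteq X\setminus A}w_B$. So I would define $w_B$ by Möbius inversion of $C\mapsto f(X)-f(X\setminus C)$. A routine check, using $f(\emptyset)=0$ to handle the $B=\emptyset$ term and noting that the constant $f(X)$ cancels for $B\neq\emptyset$, should give
\[w_B=-\sum_{D\supseteq X\setminus B}(-1)^{|D\setminus(X\setminus B)|}f(D).\]
Thus $w_B\ge 0$ for all nonempty $B$ is precisely \eqref{eq:strong_moebius} with $A=X\setminus B\subsetneq X$.

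So \eqref{eq:strong_moebius} gives a nonnegative coverage representation, and hence strong submodularity by the previous paragraph. This closes both (vii) and (viii). I expect this inversion bookkeeping, in particular the signs and the treatment of the empty set, to be the main obstacle.

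\emph{Items (ix) and (x).}
For (ix), order the distinct weights $0=v_0<v_1<\cdots<v_r$. Then
\[\max\{w_a : a\in A\}=\sum_j (v_j-v_{j-1})\,\bigl[A\cap\{x: w_x\ge v_j\}\neq\emptyset\bigr],\]
which is a nonnegative coverage function, so (viii) applies.

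For (x), set $c=g(\emptyset)\le\min_a\omega_a$. Then $g=c+f$, where $f(\emptyset)=0$ and $f(A)=\max_{a\in A}(\omega_a-c)$ with weights $\omega_a-c\ge 0$. This follows from (i), (ii) and (ix).
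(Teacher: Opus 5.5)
Your proof is correct and complete. It differs from the paper's mainly in being self-contained where the paper leans on citations.

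\textbf{What the paper does.}
\begin{itemize}
\item For (i)--(vii) it cites Section~9 of Lov\'asz, and for (viii) it cites Theorem~9.4(ii) there.
\item For (ix) it writes $f(A)=\mu\bigl(\bigcup_{a\in A}[0,w_a]\bigr)$, with $\mu$ Lebesgue measure on $[0,\infty)$, and invokes the measure-space form of the coverage theorem (Theorem~9.4(iii) of Lov\'asz).
\item For (x) it uses the same constant shift you use.
\end{itemize}

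\textbf{What your route does instead.} You replace the citations with two elementary ingredients.
\begin{itemize}
\item The product formula for the alternating sum of a single indicator $[A\cap S\neq\emptyset]$, which comes from multiplicativity of the disjointness indicator over unions.
\item An explicit M\"obius inversion of $C\mapsto f(X)-f(X\setminus C)$. Its coefficients $w_B$, for $B\neq\emptyset$, are exactly the negated left-hand sides of the M\"obius condition in (vii) at $A=X\setminus B$.
\end{itemize}
Together these settle (vi), (vii) and (viii) at once.

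Your layer-cake formula for (ix) is the discretization of the paper's measure representation. The $j$-th indicator records whether the layer $(v_{j-1},v_j]$ is covered by $\bigcup_{a\in A}[0,w_a]$. So you need only the finite coverage result, not its measure-theoretic extension.

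\textbf{Trade-off.} The paper's version is shorter. Yours makes the proposition independent of the reference, and it exhibits the coverage representation of $\max$ explicitly with nested sets $\{x: w_x\ge v_j\}$.

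\textbf{One bookkeeping correction.} The hypothesis $f(\emptyset)=0$ is not what makes the $B=\emptyset$ coefficient vanish; that coefficient is $f(X)-f(X)=0$ automatically. The hypothesis is needed at $A=\emptyset$, to get $f(X)=\sum_{B\neq\emptyset}w_B$. That is what lets you pass from $f(X)-f(A)=\sum_{\emptyset\neq B\subseteq X\setminus A}w_B$ to the coverage form $f(A)=\sum_{B\cap A\neq\emptyset}w_B$. Without it you would only get $f(A)=f(\emptyset)+\sum_{B\cap A\neq\emptyset}w_B$.
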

\begin{proof}
(i). to (vii). are all found in Section~9 of \cite{lovasz-2023-submodular-setfunctions}. \\
(viii). corresponds to Theorem~9.4(ii) of \cite{lovasz-2023-submodular-setfunctions}. This shows that strongly submodular functions with $f(\emptyset) = 0$ are exactly the {\em coverage functions} \cite{Badanidiyuru2012Sketching}. \\
For (ix), let $W = [0,\infty)$ with Lebesgue measure $\mu$ and define the relation $\mathcal{R} = \{(a,t) \in X \times W: t \leq w_a\}$. Defining $\mathcal{R}(A) = \{t \in W: (a,t) \in R \mbox{ for some $a \in A$}\}$ we have $\mathcal{R}(\emptyset) = \emptyset$ and $\mathcal{R}(A) = \bigcup_{a \in A} [0,w_a] = [0,f(A)]$ so $f(A) = \mu(\mathcal{R}(A))$ for all $A$ and $f$ is strongly submodular by Theorem~9.4(iii) of \cite{lovasz-2023-submodular-setfunctions}. \\
For (x), define $w_a = \omega_a - g(\emptyset)$ for all $a \in X$ and define $f$ as in (ix). As $g$ differs from $f$ by a constant and $f$ is strongly submodular, so is $g$. 
\end{proof}

\subsection{Properties of negative-type diversities}

Diversities of negative type were introduced by \cite{WuBryantEtal19} as an analogue of metrics of negative type, though the same concept pops up in multiple contexts. When $X$ is finite, a pair $(X,\delta)$ is a diversity of negative type if 
\[ \sum_{A,B \subseteq X} \bx_A \bx_B \delta(A \cup B) \leq 0\]
for all vectors $\bx$ such that $\bx_\emptyset = 0$ and $\sum_{A \neq \emptyset} \bx_A = 0$. Notably, every $\ell_1$-embeddable diversity is negative type, though the converse is not true \cite{WuBryantEtal19}. Diversities of negative type correspond to negative definite functions on the semigroup $(2^X \setminus  \{\emptyset\}, \cup)$ \cite{BergChristensenRessel1984}. Note that we exclude the empty set from this semigroup to line up the definitions. 

The most useful tool for working with diversities of negative type is the following M\"obius transform formulation. Given a finite set $X$ and any function $f:2^X \rightarrow \Re$, define 
\[\lambda[f]_A = \sum_{B:A \subseteq B} (-1)^{|A|+|B|+1} f(B)\]
for all $A \subseteq X$. This equation has the inverse
\[f(A) = -\sum_{B:A \subseteq B} \lambda[f]_B,\]
see \cite{WuBryantEtal19}. The similarity to \eqref{eq:strong_moebius} is no coincidence. 

\begin{proposition} \label{prop:charNegBonus}
Let $\delta:2^X \rightarrow \Re$ be a function defined on subsets of a finite set $X$. Then $(X,\delta)$ is a diversity of negative type if and only if
$\lambda[\delta]_A \geq 0$ for all $A \neq \emptyset, X$,
\begin{equation} \label{eq:negTypeEmpty}
    \sum_{A \subseteq X} \lambda[\delta]_A = 0
\end{equation}
and
\begin{equation} \label{eq:negTypeSingleton}
\sum_{A \subseteq X:x \in A} \lambda[\delta]_A = 0
\end{equation}
for all $x$. 
\end{proposition}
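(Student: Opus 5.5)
The strategy is to rewrite the negative type quadratic form in Möbius coordinates, where it becomes a weighted sum of squares. The coefficients of that sum must then be non-negative, and the diversity axiom (D1') forces the two linear conditions. Write $\lambda = \lambda[\delta]$. By the inversion formula, $\delta(C) = -\sum_{D \supseteq C} \lambda_D$. For a vector $\bx$ indexed by subsets,
\[ \sum_{A,B} \bx_A \bx_B \delta(A \cup B) = -\sum_{D} \lambda_D \sum_{A,B:\, A\cup B \subseteq D} \bx_A \bx_B = -\sum_D \lambda_D \Bigl(\sum_{A \subseteq D} \bx_A\Bigr)^2 .\]
Set $y_D = \sum_{A \subseteq D} \bx_A$. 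The constraints $\bx_\emptyset = 0$ and $\sum_{A \neq \emptyset} \bx_A = 0$ say exactly that $y_\emptyset = 0$ and $y_X = 0$. Since $\bx \mapsto y$ is the (invertible) zeta transform, the values $y_D$ for $D \neq \emptyset, X$ can be prescribed arbitrarily. The form therefore equals $-\sum_{D \neq \emptyset, X} \lambda_D y_D^2$ with free $y$. It is $\leq 0$ for all admissible $\bx$ if and only if $\lambda_D \geq 0$ for all $D \neq \emptyset, X$.

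It remains to show that \eqref{eq:negTypeEmpty} and \eqref{eq:negTypeSingleton} are equivalent to the diversity axioms, given $\lambda_D \ge 0$ on proper nonempty $D$.

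\textbf{Zero on small sets.} By inversion, $\delta(\emptyset) = -\sum_{A} \lambda_A$ and $\delta(\{x\}) = -\sum_{A \ni x} \lambda_A$. So (D1') is literally \eqref{eq:negTypeEmpty} together with \eqref{eq:negTypeSingleton}.

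\textbf{Necessity.} If $(X,\delta)$ is a negative type diversity, it satisfies (D1'), so both equations hold. The first paragraph gives the sign conditions.

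\textbf{Sufficiency.} Conversely, assume the sign conditions and the two equations. Then $\delta$ vanishes on sets of size at most $1$ and the quadratic form is non-positive. What is left is to check (D3) and (D4).

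\textbf{Monotonicity (D3).} For $C \neq \emptyset$ we have $\delta(C) = -\lambda_X - \sum_{D \supseteq C,\, D \neq X} \lambda_D$, where every term in the sum is non-negative. Enlarging $C$ removes some of these terms, so it can only increase $\delta$.

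\textbf{Subadditivity (D4).} Let $A \cap B \neq \emptyset$ and pick $x \in A \cap B$. Using $\delta(\{x\}) = 0$, write
\[ \delta(C) = \sum_{D \ni x,\, D \not\supseteq C} \lambda_D \quad \text{for } x \in C. \]
This expresses $\delta(C)$ as a sum over $D \ni x$ with $D \neq X$, so only non-negative weights appear. Now if $D \not\supseteq A \cup B$, then $D \not\supseteq A$ or $D \not\supseteq B$. Hence the index set for $A \cup B$ is contained in the union of the index sets for $A$ and for $B$. Non-negativity of the weights then gives $\delta(A \cup B) \le \delta(A) + \delta(B)$.

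I expect the main care point to be the change of variables $\bx \mapsto y$. One must check that the two constraints map exactly onto $y_\emptyset = y_X = 0$ and that the remaining $y_D$ are unconstrained, so that each $\lambda_D$ can be isolated by choosing $y$ supported on a single $D$. Once that is in place, everything else is bookkeeping.
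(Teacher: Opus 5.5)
Your proof is correct, but it takes a different route from the paper. The paper proves only the two linear conditions itself, using the inversion $\delta(C) = -\sum_{D \supseteq C}\lambda[\delta]_D$ at $C=\emptyset$ and $C=\{x\}$ exactly as you do. Everything else it delegates to Theorem~2 of \cite{WuBryantEtal19}, in both directions. That citation supplies two facts: the equivalence of the negative-type inequality with $\lambda[\delta]_A \ge 0$ for $A \neq \emptyset, X$, and the fact that a function vanishing on sets of size at most one and satisfying these sign conditions automatically obeys the diversity axioms.

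You reprove that cited theorem from scratch. The identity $\sum_{A,B}\bx_A\bx_B\delta(A\cup B) = -\sum_D \lambda_D y_D^2$ diagonalizes the form on the admissible set. This uses invertibility of the zeta transform and the observation that the constraints become exactly $y_\emptyset = y_X = 0$. Your direct checks of (D3) and (D4) then replace the citation for the diversity axioms.

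The paper's route is shorter; yours is self-contained and makes the structure visible. In particular, your formula $\delta(C) = \sum_{D \ni x,\, D \not\supseteq C} \lambda_D$ for $x \in C$ exhibits each $\delta_x$ directly as a coverage function, with sets $X \setminus D$ and weights $\lambda_D \ge 0$. This shows at a glance why $\delta_x$ is strongly submodular (cf.\ Proposition~\ref{prop:basic_strong}(viii) and Proposition~\ref{prop:neg_is_strongsubmod}).

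The only detail you leave implicit is the case $C = \emptyset$ in (D3). It is immediate, because the same formula gives $\delta(C) \ge 0$ for every nonempty $C$.
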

\begin{proof}
Suppose that $(X,\delta)$ is a diversity of negative type. Then $\lambda_A \geq 0$ for all $A \neq \emptyset,X$ by Theorem~2 of \cite{WuBryantEtal19}. We also have
\[0 = \delta(\emptyset) = -\sum_{A \subseteq X} \lambda_A\]
and for all $x \in X$,
\[0 = \delta(\{x\}) = -\sum_{A \subseteq X,\, x \in A} \lambda_A.\]

For the converse, suppose that $\lambda[\delta]_A \geq 0$ for all $A \neq \emptyset,X$ and that $\lambda[\delta]$ satisfies \eqref{eq:negTypeEmpty} and \eqref{eq:negTypeSingleton} for all $x \in X$. By \eqref{eq:negTypeEmpty} and \eqref{eq:negTypeSingleton} we have $\delta(\emptyset) = 0$ and $\delta(\{x\}) = 0$ for all $x \in X$. The fact that $(X,\delta)$ is a negative-type diversity now follows by Theorem~2 of \cite{WuBryantEtal19}.
\end{proof}

\section{Strongly submodular functions and diversities of negative type}

We start by showing that diversities of negative type are exactly the diversity equivalent of strongly submodular functions. 

\begin{proposition} \label{prop:neg_is_strongsubmod}
Let $X$ be a finite set and let $\delta$ be a function on subsets of $X$ such that $\delta(\emptyset) = 0$. The following are equivalent:
\begin{enumerate}[label=(\roman*)]
\item $(X,\delta)$ is a diversity of negative type;
\item For all $x \in X$ the function $\delta_x:2^{X \setminus\{x\}} \rightarrow \Re$ is strongly submodular and zero on the empty set;
\item $\delta$ is zero on singletons and is strongly intersecting submodular;
\item For all $x \in X$ there is an $\ell_1$-embeddable diversity $\delta_1^{(x)}$ such that $\delta(A) = \delta_1^{(x)}(A)$ for all $A \subseteq X$ such that $x \in A$.
\end{enumerate}
\end{proposition}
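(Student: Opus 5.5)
The plan is to read $\delta_x$ as $\delta_x(A)=\delta(A\cup\{x\})$ for $A\subseteq X\setminus\{x\}$, and to prove (i)$\Leftrightarrow$(ii) through the M\"obius characterization of Proposition~\ref{prop:charNegBonus}. Then (ii)$\Leftrightarrow$(iii) and (ii)$\Leftrightarrow$(iv) follow by direct translation together with Proposition~\ref{prop:basic_strong}.

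\emph{(i)$\Leftrightarrow$(ii).} Fix $x$ and set $Y=X\setminus\{x\}$. The sets $B\supseteq A\cup\{x\}$ are exactly the sets $B'\cup\{x\}$ with $A\subseteq B'\subseteq Y$. So for $A\subseteq Y$,
\[\lambda[\delta]_{A\cup\{x\}} = -\sum_{B'\,:\,A\subseteq B'\subseteq Y}(-1)^{|B'\setminus A|}\delta_x(B').\]
Since $\delta_x(\emptyset)=\delta(\{x\})$, Proposition~\ref{prop:basic_strong}(vii) applies whenever $\delta(\{x\})=0$. It says that $\delta_x$ is strongly submodular if and only if $\lambda[\delta]_{A\cup\{x\}}\ge 0$ for all $A\subsetneq Y$, i.e.\ for all sets containing $x$ other than $X$. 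Letting $x$ range over $X$, these conditions together say exactly that $\lambda[\delta]_A\ge0$ for all $A\neq\emptyset,X$. The side conditions match through the inverse formula $f(A)=-\sum_{B\supseteq A}\lambda[f]_B$: condition \eqref{eq:negTypeEmpty} is $\delta(\emptyset)=0$, which holds by hypothesis, and \eqref{eq:negTypeSingleton} is $\delta(\{x\})=0$, i.e.\ $\delta_x(\emptyset)=0$. Proposition~\ref{prop:charNegBonus} then gives the equivalence.

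\emph{(ii)$\Leftrightarrow$(iii).} Suppose $A_0\neq\emptyset$ and pick $x\in A_0$. Every union $A_0\cup\bigcup_{i\in K}A_i$ contains $x$, so the defining inequality \eqref{def:super_submod} for $\delta$ is the strong submodularity inequality for $\delta_x$ applied to the sets $A_i\setminus\{x\}$. Conversely, the strong submodularity inequality for $\delta_x$ with sets $A_0,\dots,A_m\subseteq Y$ is the intersecting inequality for $\delta$ with $A_0\cup\{x\}$ in place of $A_0$. Vanishing on singletons corresponds to $\delta_x(\emptyset)=0$ for all $x$.

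\emph{(ii)$\Leftrightarrow$(iv).} Assume (ii). By Proposition~\ref{prop:basic_strong}(viii) we can write $\delta_x(A)=\sum_{i:A\cap A_i\neq\emptyset}w_i$ with $A_i\subseteq Y$ and $w_i\ge0$. Define $\delta_1^{(x)}=\sum_i w_i\,\delta_{A_i}$, where $\delta_{S}(A)=1$ if $A$ meets both $S$ and $X\setminus S$, and $\delta_S(A)=0$ otherwise. This is a nonnegative combination of cut diversities, hence $\ell_1$-embeddable by the standard cut-cone description of $\ell_1$-embeddable diversities \cite{BryantTupper14}. Now take $x\in A$. Since $x\notin A_i$, the set $A$ always meets $X\setminus A_i$, so $A$ is split by the cut $A_i$ exactly when it meets $A_i$. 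Hence $\delta_1^{(x)}(A)=\delta(A)$.

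For the converse, assume (iv). Every $\ell_1$-embeddable diversity is of negative type \cite{WuBryantEtal19}. Applying (i)$\Rightarrow$(ii) to $\delta_1^{(x)}$ shows that $(\delta_1^{(x)})_x$ is strongly submodular and vanishes on $\emptyset$. This function coincides with $\delta_x$, because every set involved contains $x$.

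The main obstacle I expect is the M\"obius bookkeeping in (i)$\Leftrightarrow$(ii): checking the signs, and checking that the index sets $\{A\cup\{x\}: A\subsetneq Y,\ x\in X\}$ cover exactly the nonempty proper subsets of $X$. A secondary point is citing correctly that nonnegative combinations of cut diversities are $\ell_1$-embeddable.
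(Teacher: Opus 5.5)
Your proof is correct. Parts (i)$\Leftrightarrow$(ii) and (ii)$\Leftrightarrow$(iii) follow the paper exactly. You use the same identity $\lambda[\delta]_{A\cup\{x\}}=-\sum_{A\subseteq B'\subseteq X\setminus\{x\}}(-1)^{|B'\setminus A|}\delta_x(B')$, fed into Propositions~\ref{prop:basic_strong}(vii) and~\ref{prop:charNegBonus}, and the same ``choose $x\in A_0$'' translation. You are also more explicit than the paper on two points: that the sets $A\cup\{x\}$ with $A\subsetneq X\setminus\{x\}$ exhaust the nonempty proper subsets, and that \eqref{eq:negTypeEmpty} and \eqref{eq:negTypeSingleton} reduce to $\delta(\emptyset)=0$ and $\delta(\{x\})=0$.

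The genuine difference is in (iv). The paper proves (i)$\Rightarrow$(iv) entirely in the M\"obius calculus. It symmetrizes the coefficients, setting $\alpha_A=\lambda[\delta]_A$ if $x\in A$ and $\alpha_A=\lambda[\delta]_{X\setminus A}$ otherwise, and then cites Proposition~8 of \cite{WuBryantEtal19} for $\ell_1$-embeddability. You instead prove (ii)$\Rightarrow$(iv) from the coverage representation of Proposition~\ref{prop:basic_strong}(viii) together with cut diversities.

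The two constructions in fact give the same diversity. If the representation uses one weight per subset, the coverage weight on $A_i$ is $\lambda[\delta]_{X\setminus A_i}$, which is exactly the paper's weight on the split $\{A_i,X\setminus A_i\}$. Your version exhibits the embedding directly, via $a\mapsto(w_i\,\mathbf{1}[a\in A_i])_i\in\Re^m$, so you do not even need the cut-cone citation. The paper's version avoids Proposition~\ref{prop:basic_strong}(viii) but depends on the external characterization.

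For the converse, both arguments rest on the same observation. $\delta$ and $\delta_1^{(x)}$ agree on every set containing $x$, so the relevant M\"obius sums, or equivalently the functions $\delta_x$, coincide.
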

\begin{proof}
(i) $\Leftrightarrow$ (ii).\\
For all $x \in X$ and $Y \subsetneq X \setminus \{x\}$, 
\begin{align*}
\sum_{Z \subseteq X \setminus \{x\}:Y \subseteq Z} (-1)^{|Z \setminus Y|} \delta_x(Z)  & = \sum_{Z \subseteq X \setminus \{x\}:Y \subseteq Z} (-1)^{|Z \setminus Y|} \delta(Z \cup \{x\}) \\
& =   \sum_{Z \subseteq X \setminus \{x\} :Y \subseteq Z} (-1)^{|Z| - |Y|} \delta(Z \cup \{x\})  \\
& =  \sum_{Z \subseteq X: Y \cup \{x\} \subseteq Z} (-1)^{|Y| + |Z| + 1} \delta(Z) \\
& = -\lambda[\delta]_{Y \cup \{x\}}.
\end{align*}
The result now follows from Propositions~\ref{prop:basic_strong} (vii) and \ref{prop:charNegBonus}. \\
(ii) $\Rightarrow$ (iii).\\
For all $x \in X$, $\delta(\{x\}) = \delta_x(\emptyset) = 0$, so $\delta$ is zero on singletons. Suppose that $A_0,A_1,\ldots,A_m$ are subsets of $X$ and $A_0 \neq \emptyset$. Choose any $x \in A_0$ and define 
$B_i = A_i \setminus \{x\}$ for all $i=0,\ldots,m$. 
Then 
\[\sum_{K \subseteq \{1,2,\ldots,m\}} (-1)^{|K|} \delta \left(A_0 \cup \bigcup_{i \in K} A_i \right) = \sum_{K \subseteq \{1,2,\ldots,m\}} (-1)^{|K|} \delta_x \left(B_0 \cup \bigcup_{i \in K} B_i \right)  \leq 0\]
as $\delta_x$ is strongly submodular. Hence $\delta$ is strongly intersecting submodular.\\
(iii) $\Rightarrow$ (ii) \\
Fix $x \in X$. Then $\delta_x(\emptyset) = \delta(\{x\}) = 0$ as $\delta$ is zero on singletons. Given subsets $A_0,A_1,\ldots,A_m$ of $X \setminus \{x\}$ define $B_i = A_i \cup \{x\}$ for $i=0,\ldots,m$. Then 
\[\sum_{K \subseteq \{1,2,\ldots,m\}} (-1)^{|K|} \delta_x\left(A_0 \cup \bigcup_{i \in K} A_i \right) = \sum_{K \subseteq \{1,2,\ldots,m\}} (-1)^{|K|} \delta\left(B_0 \cup \bigcup_{i \in K} B_i \right) \leq 0,\]
so $\delta_x$ is strongly submodular by \eqref{def:super_submod}. \\
(i) $\Rightarrow$ (iv).\\
Fix $x \in X$. For all nonempty $A \subseteq X$ let
\[\alpha_A = \begin{cases} \lambda[\delta]_A & \mbox{ if $x \in A$ } \\ \lambda[\delta]_{X \setminus A} & \mbox{ if $x \not \in A$ } \end{cases}\]
and $\alpha_\emptyset = -\sum_{A \neq \emptyset} \alpha_A$. 
Define $\delta_1^{(x)}$ by \[\delta_1^{(x)} (A) = -\sum_{B:A \subseteq B} \alpha_B\]
for all $A \subseteq X$. Then $(X,\delta_1^{(x)})$ is an $\ell_1$-embeddable diversity by Proposition~8 of \cite{WuBryantEtal19}. 
(iv) $\Rightarrow$  (i)\\
By Proposition~3 of \cite{WuBryantEtal19}, any $\ell_1$-embeddable diversity $\delta^{(x)}_1$ has negative type. Hence for all nonempty $A \subsetneq X$ and $x \in A$ we have 
\[\lambda[\delta]_A = \sum_{B:A \subseteq B}(-1)^{|A|+|B|+1} \delta(B) = \sum_{B:A \subseteq B}(-1)^{|A|+|B|+1} \delta_1^{(x)}(B) = \lambda[\delta^{(x)}_1]_A \]
which is non-negative by Proposition~\ref{prop:charNegBonus}. By the same proposition, $(X,\delta)$ has negative type. \\

\end{proof}

We describe two ways to construct a diversity of negative type from an arbitrary strongly submodular function.

\begin{proposition} \label{prop:extend_strong}
\begin{enumerate}
\item Let $f:2^X \rightarrow \Re$ be a strongly submodular function. Then $(X,\delta)$ given by $\delta(\emptyset) = 0$ and 
\[\delta(A) = f(A) - \min\{f(\{a\}): a \in A \} \quad \quad (A \neq \emptyset)\]
is a diversity of negative type.
\item Fix $x \in X$. Let $g:2^{X \setminus \{x\}} \rightarrow \Re$ be a strongly submodular function with $g(\emptyset) = 0$. Then $(X,\whd)$ given by 
\[
\whd(A) = \begin{cases} g(A \setminus \{x\}) & \mbox{ if $x \in A$;} \\
g(A) - \min\{g(\{a\}):a \in A\} & \mbox{ if $A \neq \emptyset$ and $x \not \in A$;} \\
0 & \mbox{ if $A = \emptyset$,} \end{cases}
\]
has negative type.
\end{enumerate}
\end{proposition}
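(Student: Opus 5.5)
The plan is to verify condition (iii) of Proposition~\ref{prop:neg_is_strongsubmod} in both parts: $\delta$ vanishes on the empty set and on singletons, and is strongly intersecting submodular. That proposition then gives negative type directly.

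For part 1, the singleton condition is immediate, since $\delta(\{a\}) = f(\{a\}) - f(\{a\}) = 0$. For nonempty $A$ write $\delta(A) = f(A) + h(A)$, where $h(A) = \max\{-f(\{a\}) : a \in A\}$. On nonempty sets, $h$ agrees with the function $g$ of Proposition~\ref{prop:basic_strong}(x), taking $\omega_a = -f(\{a\})$ and setting $g(\emptyset) = \min_a \omega_a$. So $g$ is strongly submodular.

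Now take $A_0 \neq \emptyset$. Every set $A_0 \cup \bigcup_{i\in K} A_i$ appearing in \eqref{def:super_submod} is nonempty, so on all of them $\delta = f + g$. The alternating sum for $\delta$ is therefore the alternating sum for $f$ plus the alternating sum for $g$. Both are $\le 0$, by the strong submodularity of $f$ and by Proposition~\ref{prop:basic_strong}(x) respectively. Hence $\delta$ is strongly intersecting submodular, and part 1 follows.

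For part 2, the natural route is to check condition (ii) of Proposition~\ref{prop:neg_is_strongsubmod} for each $y \in X$. An equivalent route is to check (iii) directly by splitting into cases according to whether the sets in the sum contain $x$.

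First take $y = x$. Then $\whd_x(A) = g(A)$, which is strongly submodular with $g(\emptyset) = 0$ by hypothesis.

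Now take $y \neq x$. Here $\whd_y$ is defined on subsets of $X \setminus \{y\}$ by
\[\whd_y(B) = \begin{cases} g((B \cup \{y\}) \setminus \{x\}) & \mbox{if $x \in B$,} \\ g(B \cup \{y\}) - \min\{g(\{a\}) : a \in B \cup \{y\}\} & \mbox{if $x \notin B$.} \end{cases}\]

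The key step is to write $\whd_y$ as a composition $G \circ \Gamma$, where $\Gamma(B) = B \cup \{y\}$ is union-preserving, and then apply Proposition~\ref{prop:basic_strong}(v). Extend $g$ to a function $\tilde g$ on $2^X$ by
\[\tilde g(C) = g(C \setminus \{x\}) + \begin{cases} 0 & \mbox{if $x \in C$,} \\ -\min\{g(\{a\}) : a \in C\} & \mbox{if $x \notin C$.} \end{cases}\]
With $\omega_a = -g(\{a\})$ for $a \neq x$, the correction term is $\max\{\omega_a : a \in C\}$ when $x \notin C$. Setting $\omega_x = 0$ makes it $\max\{\omega_a : a \in C\}$ whenever $x \in C$ as well, provided $\omega_a \le 0$, that is, $g(\{a\}) \ge 0$. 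This holds because $g$ is non-decreasing with $g(\emptyset) = 0$, by Proposition~\ref{prop:basic_strong}(iv).

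With this choice, $\tilde g(C) = g(C \setminus \{x\}) + \max\{\omega_a : a \in C\}$. The first term is strongly submodular by Proposition~\ref{prop:basic_strong}(v) with $\Gamma(C) = C \setminus \{x\}$. The second term is strongly submodular by Proposition~\ref{prop:basic_strong}(x), after assigning a suitable value at $\emptyset$; this value is irrelevant because $\Gamma(B) = B \cup \{y\}$ is never empty.

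Then $\whd_y = \tilde g \circ \Gamma$ is strongly submodular, and $\whd_y(\emptyset) = g(\{y\}) - g(\{y\}) = 0$. Together with $\whd(\emptyset) = 0$, condition (ii) holds for every $y$, so $\whd$ has negative type.

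The main obstacle is this unification of the $x \in A$ and $x \notin A$ cases into a single max-type term, which depends on $g(\{a\}) \ge 0$. If the unification fails, the fallback is to verify (iii) directly. One would split the alternating sum according to whether $x$ lies in $A_0$, lies in none of the $A_i$, or first appears in some $A_i$. In the last case, one would pair terms by including or excluding the index sets that contain $x$.
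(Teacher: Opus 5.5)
Your proof is correct and follows the paper's argument: part 1 is the same as the paper's (write $\delta = f + \rho$ on nonempty sets with $\rho$ from Proposition~\ref{prop:basic_strong}(x), then invoke condition (iii) of Proposition~\ref{prop:neg_is_strongsubmod}). In part 2 your $\tilde g$ is exactly $f+\rho$ for $f(C) = g(C\setminus\{x\})$, and the paper simply notes that $f(\{x\}) = 0 \le f(\{a\})$, so $\whd$ is literally the part-1 construction applied to this $f$; your separate check of condition (ii) via $B \mapsto B\cup\{y\}$ is valid but could be replaced by a one-line appeal to part 1.
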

\begin{proof}
1.  Define $\rho:2^X \rightarrow \Re$ by $\rho(A) = \max\{-f(\{a\}): a \in A\}$ for all nonempty $A \subseteq X$, with $\rho(\emptyset) = \min\{-f(\{a\}):a \in  X\}$. Then $\rho$ is strongly submodular, by Proposition~\ref{prop:basic_strong}(x), and hence so is $h = f + \rho$. As $\delta(A) = h(A)$ for all $A \neq \emptyset$ we have that $\delta$  satisfies \eqref{def:super_submod} for all $m \geq 1$ and $A_0,A_1,\ldots,A_m \in 2^X$ such that $A_0 \neq \emptyset$. Hence $\delta$ is strongly intersecting submodular, and as $\delta(A)  = 0$ when $|A| \leq 1$ we have from Proposition~\ref{prop:neg_is_strongsubmod} that $(X,\delta)$ has negative type.  \\
2. Define $f:2^X \rightarrow \Re$ by $f(A) = g(A \setminus \{x\})$ for all $A \subseteq X$. Then $f$ is strongly submodular by Proposition~\ref{prop:basic_strong} (v), $f(\{a\}) \geq 0$ for all $a \in X$, $f(\{x\}) = 0$, so 
\[\whd(A) = f(A) - \min\{f(\{a\}): a \in A \}\]
for all nonempty $A \subseteq X$. The result follows by applying the construction in part 1.
\end{proof}

\section{Submodular functions and diversities}

Recall that a diversity $(X,\delta)$ is {\em submodular} if $\delta$ is intersecting submodular, which holds when 
\[\delta(A \cup B) + \delta(A \cap B) \leq \delta(A) + \delta(B)\]
whenever $A \cap B \neq \emptyset$.

. 

\begin{proposition} \label{prop:basic_submodular_diversities}
\begin{enumerate}
\item A diversity $(X,\delta)$ is submodular if and only if for all $x \in X$ the function $\delta_x:2^{X \setminus \{x\}} \rightarrow \Re$ given by $\delta_x(A) = \delta(A \cup \{x\})$ is submodular.
\item Every diversity of negative type is submodular. 
\end{enumerate}
\end{proposition}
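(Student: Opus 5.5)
For part 1, I will translate directly between intersecting pairs in $X$ and arbitrary pairs in $X\setminus\{x\}$, exactly as in the proof of (ii)$\Leftrightarrow$(iii) in Proposition~\ref{prop:neg_is_strongsubmod}.

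Suppose first that $\delta$ is intersecting submodular, and fix $x \in X$ and $A,B \subseteq X\setminus\{x\}$. Put $A' = A\cup\{x\}$ and $B' = B\cup\{x\}$. These sets intersect, since both contain $x$. Moreover $A'\cup B' = (A\cup B)\cup\{x\}$ and $A'\cap B' = (A\cap B)\cup\{x\}$. Hence
\[
\delta_x(A\cup B)+\delta_x(A\cap B) = \delta(A'\cup B')+\delta(A'\cap B') \leq \delta(A')+\delta(B') = \delta_x(A)+\delta_x(B).
\]

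Conversely, suppose each $\delta_x$ is submodular, and let $A,B\subseteq X$ with $A\cap B\neq\emptyset$. Choose $x\in A\cap B$ and put $A_0 = A\setminus\{x\}$ and $B_0 = B\setminus\{x\}$. Since $x$ lies in $A$, $B$, $A\cup B$ and $A\cap B$, we have
\[
\delta(A)=\delta_x(A_0),\quad \delta(B)=\delta_x(B_0),\quad \delta(A\cup B)=\delta_x(A_0\cup B_0),\quad \delta(A\cap B)=\delta_x(A_0\cap B_0).
\]
Submodularity of $\delta_x$ applied to $A_0,B_0$ then gives the intersecting submodular inequality for $A,B$. This direction uses nothing about diversities, so part 1 is pure bookkeeping.

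For part 2, let $(X,\delta)$ be a diversity of negative type. By Proposition~\ref{prop:neg_is_strongsubmod}, (i)$\Rightarrow$(ii), each $\delta_x$ is strongly submodular. By Proposition~\ref{prop:basic_strong}(iii), each $\delta_x$ is therefore submodular. Part 1 then shows that $\delta$ is intersecting submodular, so $(X,\delta)$ is a submodular diversity.

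An alternative route is to take $m=2$ in the strong intersecting condition of Proposition~\ref{prop:neg_is_strongsubmod}(iii), with $A_0 = A\cap B$, $A_1 = A$, $A_2 = B$. This gives
\[
\delta(A\cap B)-\delta(A)-\delta(B)+\delta(A\cup B)\leq 0,
\]
which is the required inequality, and $A_0 = A\cap B$ is nonempty precisely because $A$ and $B$ intersect.

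Neither part has a genuine obstacle. The only point needing care is that the chosen $x$ must lie in both $A$ and $B$. In part 1 this guarantees that adding or removing $x$ commutes with unions and intersections. In the alternative argument for part 2 it guarantees $A_0\neq\emptyset$, which the strong intersecting condition requires.
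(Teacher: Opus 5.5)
Your proof is correct and follows the paper's approach: part 1 is the direct translation between intersecting pairs sharing a point $x$ and arbitrary pairs in $X\setminus\{x\}$, and part 2 combines Proposition~\ref{prop:neg_is_strongsubmod} with Proposition~\ref{prop:basic_strong}(iii), which is exactly what the paper cites. Your alternative for part 2 is also valid and slightly more direct: take $m=2$, $A_0=A\cap B$, $A_1=A$, $A_2=B$ in the strongly intersecting submodular condition, which avoids going back through part 1.
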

\begin{proof}
1. follows directly from the definition of intersecting submodular functions while 2. is a consequence of Proposition~\ref{prop:neg_is_strongsubmod} and Proposition~\ref{prop:basic_strong}(iii).
\end{proof}

The inclusion of negative type diversities within the class of submodular diversities is strict.

\begin{example} \label{ex:submod_not_linear}
Let $X = \{a,b,c,d\}$. The diversity $(X,\delta)$ with $\delta$ given by
\[\delta(A) = \begin{cases} 2 & \mbox{if $|A| \geq 3$;}\\ 1 & \mbox{if $|A|=2$} \\ 0 & \mbox{ otherwise} \end{cases} \]
is submodular but not linear-embeddable.
\end{example}
\begin{proof}
We have 
\begin{align*}
\lambda[\delta]_{\{a\}} & = -\delta(\{a\}) + \delta(\{a,b\}) + \delta(\{a,c\}) + \delta(\{a,d\}) - \delta(\{a,b,c\}) - \delta(\{a,b,d\}) - \delta(\{a,c,d\}) + \delta(\{a,b,c,d\}) \\
& = -1
\end{align*}
so by Proposition~\ref{prop:charNegBonus}, $(X,\delta)$ is not negative-type and hence, by Theorem~\ref{thm:linearChar}, not linear-embeddable. This example appears in Proposition~3 of \cite{WuBryantEtal19}.

To see that $(X,\delta)$ is submodular, observe that for all $x \in X$,  $A \subseteq X \setminus \{x\}$ we have $\delta_x(A) = \min\{|A|,2\}$. The function $g(A) = |A|$ is strongly submodular, and therefore submodular, and so $\delta_x$ is submodular by Proposition~\ref{prop:basic_submodular}. 
\end{proof}

The extensive literature on submodular functions provides a rich source for new diversities. The following proposition can be used to construct submodular diversities from submodular functions in the same way that Proposition~\ref{prop:extend_strong} constructs negative-type diversities from strongly submodular functions. 

\begin{proposition} \label{prop:extend_sub}
\begin{enumerate}
\item Let $f:2^X \rightarrow \Re$ be a non-decreasing submodular function. Then $(X,\delta)$ given by $\delta(\emptyset) = 0$ and 
\[\delta(A) = f(A) - \min\{f(\{a\}): a \in A \}\]
is a submodular diversity.
\item Fix $x \in X$. Let $g:2^{X \setminus \{x\}} \rightarrow \Re$ be a non-decreasing submodular function with $g(\emptyset) = 0$. Then $(X,\whd)$ given by 
\[
\whd(A) = \begin{cases} g(A \setminus \{x\}) & \mbox{ if $x \in A$;} \\
g(A) - \min\{g(\{a\}):a \in A\} & \mbox{ if $A \neq \emptyset$ and $x \not \in A$;} \\
0 & \mbox{ if $A = \emptyset$,} \end{cases}
\]
is a submodular diversity.
\end{enumerate}
\end{proposition}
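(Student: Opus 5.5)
Part 1 follows the same route as Proposition~\ref{prop:extend_strong}. Set $\rho(A) = \max\{-f(\{a\}):a \in A\}$ for nonempty $A$, and define $h = f + \rho$. Then $\delta(A) = h(A)$ for every nonempty $A$. The crucial observation is that on nonempty sets $\rho$ satisfies the intersecting submodular inequality with equality. To check this, take $A \cap B \neq \emptyset$ and put $\omega_a = -f(\{a\})$. We have $\rho(A\cup B) = \max\{\rho(A),\rho(B)\}$. Also $\rho(A \cap B) \le \min\{\rho(A),\rho(B)\}$, and this only needs $A\cap B$ nonempty. Adding, $\rho(A\cup B)+\rho(A\cap B) \le \rho(A)+\rho(B)$. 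Alternatively we could appeal to Proposition~\ref{prop:basic_strong}(x) and (iii).

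Since $f$ is submodular, $h$ is then intersecting submodular, and hence so is $\delta$, because only nonempty sets appear whenever $A \cap B \neq \emptyset$. It remains to check that $\delta$ is a diversity.
\begin{itemize}
\item $\delta(\emptyset)=0$ by definition, and $\delta(\{a\}) = f(\{a\}) - f(\{a\}) = 0$.
\item $\delta$ is non-decreasing. For nonempty $A \subseteq B$, $f(A)\le f(B)$ because $f$ is non-decreasing, and $\min_{a\in A} f(\{a\}) \ge \min_{b \in B} f(\{b\})$. For $A=\emptyset$ we need $\delta(B)\ge 0$; this holds because $f(B) \ge f(\{b\})$ for every $b\in B$.
\item $\delta$ is intersecting subadditive. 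For intersecting $A,B$, submodularity gives $\delta(A\cup B) \le \delta(A)+\delta(B)-\delta(A\cap B) \le \delta(A)+\delta(B)$, using $\delta \ge 0$.
\end{itemize}
By the remark in the introduction, non-decreasing plus intersecting subadditive plus vanishing on sets of size at most one characterizes diversities. So $(X,\delta)$ is a submodular diversity.

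Part 2 reduces to part 1, again mirroring Proposition~\ref{prop:extend_strong}. Define $f(A) = g(A\setminus\{x\})$ on $2^X$. The map $\Gamma(A) = A \setminus\{x\}$ preserves unions, so Proposition~\ref{prop:basic_submodular}(4) shows that $f$ is non-decreasing and submodular. Because $g$ is non-decreasing with $g(\emptyset)=0$, we have $f(\{a\}) \ge 0 = f(\{x\})$. Consequently, for $x\in A$ the minimum $\min_{a\in A} f(\{a\})$ is $0$, which gives $\whd(A) = g(A\setminus\{x\}) = f(A) - \min_{a \in A} f(\{a\})$. For $x \notin A$ nonempty, $f$ agrees with $g$ on $A$ and on its singletons, so the same formula holds. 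Thus $\whd$ is exactly the diversity constructed in part 1 from $f$, and the claim follows.

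The only step needing real care is the intersecting submodularity of $\rho$. The argument requires $A\cap B\neq\emptyset$, so that $\rho(A\cap B)$ is a genuine maximum over a nonempty set bounded by both $\rho(A)$ and $\rho(B)$. This is precisely why $\delta$ is intersecting submodular rather than fully submodular. Everything else is routine.
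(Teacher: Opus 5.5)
Your proposal is correct and follows the paper's proof essentially step for step: you write $\delta = f + \rho$ on nonempty sets with $\rho(A) = \max\{-f(\{a\}) : a \in A\}$, and you reduce part 2 to part 1 via $f(A) = g(A \setminus \{x\})$, differing only in checking the intersecting submodularity of $\rho$ and the diversity axioms by hand where the paper cites Proposition~\ref{prop:basic_strong}(x). One small wording slip: $\rho$ does not in general satisfy the intersecting submodular inequality \emph{with equality}, since your max/min argument gives only $\le$, which is all you need.
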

\begin{proof}
The proof of both claims is almost identical to that of Proposition~\ref{prop:extend_strong}.  However it is short, so we include the proof. \\
1.  Define $\rho:2^X \rightarrow \Re$ by $\rho(A) = \max\{-f(\{a\}): a \in A\}$ for all nonempty $A \subseteq X$, with $\rho(\emptyset) = \min\{-f(\{a\}):a \in  X\}$. Then $\rho$ is strongly submodular, by Proposition~\ref{prop:basic_strong}(x), and hence $h = f + \rho$ is non-decreasing and submodular. Furthermore $\delta(A) = f(A) + \rho(A)$ for all $A \neq \emptyset$. Hence for all $A,B$ such that $A \cap B \neq \emptyset$, 
\[\delta(A \cap B) + \delta(A \cup B) = h(A \cap B) + h(A \cup B) \leq h(A) + h(B) = \delta(A) + \delta(B)\]
and since $\delta(A) = 0$ when $|A| \leq 1$ the pair $(X,\delta)$ is a submodular diversity.\\
2. Define $f:2^X \rightarrow \Re$ by $f(A) = g(A \setminus \{x\})$ for all $A \subseteq X$. Then $f$ is submodular by Proposition~\ref{prop:basic_submodular} and the result follows by applying the construction in part 1. to $f$.
\end{proof}

As a direct consequence, we can define a submodular `entropy' diversity:

\begin{example}
The entropy of a set $\mathcal{A} = \{X_1,X_2,\ldots,X_k\}$ of discrete random variables  with state spaces $\Omega_1,\ldots,\Omega_k$ is defined
\[h(\{X_1,\ldots,X_k\}) = -\sum_{x_1 \in \Omega_1} \cdots \sum_{x_k \in \Omega_k} P[X_1 = x_1,\ldots,X_k = x_k] \log P[X_1 = x_1,\ldots,X_k = x_k] .\]
Let $\mathcal{X}$ be a finite set of random variables and define $(\mathcal{X},\delta)$ by $\delta(\emptyset) = 0$ and
\[\delta(\mathcal{A}) = h(\mathcal{A}) - \min\{h(\{a\}): a \in \mathcal{A}\}\]
for nonempty $\mathcal{A} \subseteq \mathcal{X}$. The entropy function is non-decreasing and submodular \cite{Fujishige1978}, so $(\mathcal{X},\delta)$ is a submodular diversity.
\end{example}

The main result of this section is a proof that submodular diversities are sublinear-embeddable, a result which then gives a novel geometric representation for submodular functions. By Theorem~\ref{thm:sublinearChar} a diversity is sublinear-embeddable if it can be written as the maximum of diversities of negative type. Equivalently, given a finite submodular diversity $(X,\delta)$ and each subset $T \subseteq X$ there is a diversity $(X,\eta)$ of negative type such that $\eta(A) \leq \delta(A)$ for all $A \subseteq X$ and $\eta(T) = \delta(T)$.  The main tools we use are Proposition~\ref{prop:extend_strong} combined with Theorem~\ref{thm:submodular_match}. While Theorem~\ref{thm:submodular_match} looks like a result from polymatroid theory, and submodular functions are the pointwise maxima of additive functions, the theorem adds a condition on singletons which forces a completely different proof strategy.

 \begin{theorem} \label{thm:submodular_match}
 Let $X$ be a finite set and $f:2^X \rightarrow \Re$ a non-decreasing and  submodular function with $f(\emptyset) = 0$. For all  $Y \subseteq X$ there is a strongly submodular function $g:2^X \rightarrow \Re$ such that $g(A) \leq f(A)$ for all $A \subseteq X$ and $g(A) = f(A)$ when $A =Y$ or $|A| \leq 1$. 
 \end{theorem}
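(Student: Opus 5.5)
We prove the statement by induction on $|Y|$, peeling off one element of $Y$ at a time and contracting $f$ at that element. Each step adds a ``max-type'' strongly submodular function as in Proposition~\ref{prop:basic_strong}(ix). The induction runs over all finite ground sets simultaneously, since contraction removes one element from $X$.

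\emph{Base case $Y=\emptyset$.} Set $g(\emptyset)=0$ and $g(A)=\max\{f(\{a\}):a\in A\}$ for $A\neq\emptyset$. Since $f(\{a\})\geq f(\emptyset)=0$, Proposition~\ref{prop:basic_strong}(ix) shows $g$ is strongly submodular. Clearly $g$ agrees with $f$ on sets of size at most one, including $Y=\emptyset$. Because $f$ is non-decreasing, $g(A)\le f(A)$.

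\emph{Inductive step.} Let $Y\neq\emptyset$, fix $y\in Y$, and put $X'=X\setminus\{y\}$. Define the contraction $f'(A)=f(A\cup\{y\})-f(\{y\})$ for $A\subseteq X'$. Then $f'$ is non-decreasing and submodular with $f'(\emptyset)=0$. By induction applied to $f'$ and $Y\setminus\{y\}\subseteq X'$, there is a strongly submodular $g'$ on $2^{X'}$ with $g'\le f'$, equality on $Y\setminus\{y\}$, and equality on sets of size at most one. In particular $g'(\emptyset)=0$ and $g'(\{x\})=f(\{x,y\})-f(\{y\})$.

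Next define weights $c_y=f(\{y\})$ and, for $x\neq y$, $c_x=f(\{x\})+f(\{y\})-f(\{x,y\})$. These satisfy $0\le c_x\le f(\{y\})$ by submodularity and monotonicity. Let $h(\emptyset)=0$ and $h(A)=\max\{c_a:a\in A\}$ for $A\neq\emptyset$; this is strongly submodular by Proposition~\ref{prop:basic_strong}(ix). Set
\[g(A)=g'(A\setminus\{y\})+h(A).\]
The first term is strongly submodular by Proposition~\ref{prop:basic_strong}(v), using $\Gamma(A)=A\setminus\{y\}$. The sum is then strongly submodular by Proposition~\ref{prop:basic_strong}(ii).

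We now check the required equalities.
\begin{itemize}
\item $g(\emptyset)=0$.
\item $g(\{y\})=0+c_y=f(\{y\})$.
\item For $x\neq y$, $g(\{x\})=f(\{x,y\})-f(\{y\})+c_x=f(\{x\})$.
\item Since $y\in Y$ and $c_y$ is the largest weight, $h(Y)=f(\{y\})$. Hence $g(Y)=f'(Y\setminus\{y\})+f(\{y\})=f(Y)$.
\end{itemize}

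The key step is the upper bound $g\le f$. If $y\in A$, then $h(A)=f(\{y\})$ and $g(A)\le f'(A\setminus\{y\})+f(\{y\})=f(A)$. If $y\notin A$ and $A\neq\emptyset$, let $a\in A$ be a maximizer of $c$ on $A$. Then
\[g(A)\le f(A\cup\{y\})-f(\{y\})+f(\{a\})+f(\{y\})-f(\{a,y\}).\]
This is at most $f(A)$ exactly when $f(A\cup\{y\})-f(A)\le f(\{a,y\})-f(\{a\})$. That inequality is the diminishing-returns form of submodularity, valid because $\{a\}\subseteq A$. This completes the induction.

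The main obstacle is choosing $h$ correctly. Its weights must be small enough that adding $y$ never overcounts, while still restoring the singleton values lost in the contraction. The choice $c_x=f(\{x\})+f(\{y\})-f(\{x,y\})$ is forced by the singleton conditions, and the remaining inequality is precisely submodularity.
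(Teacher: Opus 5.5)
Your proof is correct and follows essentially the same route as the paper: induction on $|Y|$ via the contraction $f(A\cup\{y\})-f(\{y\})$, adding the max-type function with weights $f(\{a\})+f(\{y\})-f(\{a,y\})$ (which equals $f(\{y\})$ at $a=y$), and checking $g\le f$ via submodularity applied to $A$ and $\{a,y\}$, which is your diminishing-returns inequality. The remaining differences are cosmetic: you cite Proposition~\ref{prop:basic_strong}(ix) rather than (x) for $h$, and you state explicitly that the induction ranges over all finite ground sets, which the paper uses implicitly.
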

 \begin{proof}
 We prove this by induction on $|Y|$.  When $|Y| = 0$, define $g$ by $g(\emptyset) = 0$ and $g(A)  = \max\{f(\{a\}): a \in A\}$ for nonempty $A \subseteq X$. Then $g$ is strongly submodular (Proposition~\ref{prop:basic_strong} (ix)),  $g(A) \leq f(A)$ for all $A \subseteq X$ and  $g(A) = f(A)$ when $A =Y$ or $|A| \leq 1$. \\
  
 Suppose that $k \geq 1$ and the induction hypothesis holds for when $|Y| = k-1$. Select $y \in Y$ and define $f_y:2^{X \setminus \{y\}} \rightarrow \Re$ by $f_y(A) = f(A \cup \{y\}) - f(\{y\})$ for all $A \subseteq X \setminus \{y\}$. By the induction hypothesis there is strongly submodular $g_y:2^{X \setminus \{y\}} \rightarrow \Re$ such that $g_y(A) \leq f_y(A)$ for all $A \subseteq X \setminus \{y\}$ and $g_y(A) = f_y(A)$ when $A =  Y \setminus \{y\}$ or when $|A| \leq 1$. 
 
Define $r: 2^X \rightarrow \Re$ by $r(\emptyset) = 0$ and 
\[r(A) = \max \{ f(\{a\}) + f(\{y\}) - f(\{a,y\}) :  a \in A\}\]
for nonempty $A \subseteq X$, noting that $r(A) \leq f(\{y\})$ for all $A$ and $r(A) = f(\{y\})$ when $y \in A$. Since $f$ is submodular and $f(\emptyset) = 0$ we have $f(\{a\}) + f(\{y\}) - f(\{a,y\}) \geq 0$ for all $a$.  We define $g:2^X \rightarrow \Re$ by $g(\emptyset) = 0$
 \[g(A) = r(A) + g_y(A \setminus \{y\}).\]
 By Proposition~\ref{prop:basic_strong} (x), (v) and (ii), $g$ is strongly submodular.
  
We show by cases that $g(A) \leq f(A)$ for all $A \subseteq X$ and $g(A) = f(A)$ when $A =Y$ or $|A| \leq 1$.  
For all nonempty $A \subseteq X$ such that $y \not \in A$  we have
 \begin{align*}
 g(A) & = g_y(A) + r(A) \\
 & \leq f_y(A) + \big(f(\{a\}) + f(\{y\}) - f(\{a,y\}) \big)& \mbox{ for some $a \in A$}\\
  & = f(A \cup \{y\}) + f(\{a\})  - f(\{a,y\}) \\
  & \leq f(A)
  \end{align*}
  by the submodularity inequality applied to $A$ and $\{a,y\}$. 
  
  If $y \in A$ then
  \begin{align*}
  g(A) & = g_y(A \setminus \{y\}) + r(A) \\
  & \leq f_y(A \setminus \{y\}) + f(\{y\}) \\
  & = f(A).
  \end{align*}
  If $A = Y$ then 
  \begin{align*}
  g(A) & = g_y(Y \setminus \{y\}) + r(Y) \\
  & = f_y(Y \setminus \{y\}) + f(\{y\})\\
  & = f(Y).
  \end{align*}
  For all $a \neq y$,
   \[g(\{a\}) = g_y(\{a\}) + \left( f(\{a\}) + f(\{y\}) - f(\{a,y\})  \right) = f(\{a\})\]
  while
   \[g(\{y\}) = g_y(\emptyset) + f(\{y\}) = f(\{y\})\]
and
\[g(\emptyset) =  g_y(\emptyset) + r(\emptyset) = 0.\]  
 
  By induction, the result holds for all $Y$. 
  \end{proof}

We can now prove the main embedding result. 

\begin{theorem} \label{thm:embed_submodular}
Let $(X,\delta)$ be a finite, submodular diversity. Then $(X,\delta)$ is sublinear-embeddable.
\end{theorem}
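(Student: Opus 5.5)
By Theorem~\ref{thm:sublinearChar} it suffices to write $\delta$ as a pointwise maximum of finitely many negative type diversities on $X$. Since $X$ is finite, it is enough to do the following: for each nonempty $T \subseteq X$ with $|T|\ge 2$, produce a diversity $(X,\eta_T)$ of negative type with $\eta_T \le \delta$ everywhere and $\eta_T(T) = \delta(T)$. Sets with $|T|\le 1$ are covered automatically, since $\delta$ vanishes there and the zero diversity has negative type. Then
\[\delta = \max_T \eta_T.\]

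\textbf{Step 1: reduce to a rooted submodular function.} Fix such a $T$ and choose $x \in T$. By Proposition~\ref{prop:basic_submodular_diversities}, the function $\delta_x(A)=\delta(A\cup\{x\})$ on $2^{X\setminus\{x\}}$ is submodular. It is non-decreasing because $\delta$ is (D3), and $\delta_x(\emptyset)=\delta(\{x\})=0$. So $f=\delta_x$ satisfies the hypotheses of Theorem~\ref{thm:submodular_match} on the ground set $X\setminus\{x\}$.

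\textbf{Step 2: match at $T$.} Apply Theorem~\ref{thm:submodular_match} with $Y = T\setminus\{x\}$. This gives a strongly submodular $g$ on $2^{X\setminus\{x\}}$ with $g\le \delta_x$, equality at $Y$, and equality on sets of size at most one. In particular $g(\emptyset)=0$ and $g(\{a\})=\delta(\{a,x\})$.

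\textbf{Step 3: build the diversity.} Form $\widehat\delta$ from $g$ as in Proposition~\ref{prop:extend_strong}(2); it has negative type. For $A\ni x$ we have
\[\widehat\delta(A)=g(A\setminus\{x\})\le \delta(A),\]
and $\widehat\delta(T)=g(Y)=\delta(T)$. Set $\eta_T=\widehat\delta$.

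\textbf{Main obstacle: sets avoiding $x$.} The remaining check is the domination $\widehat\delta(A)\le\delta(A)$ for nonempty $A$ with $x\notin A$, where
\[\widehat\delta(A)=g(A)-\min_{a\in A} g(\{a\}).\]
Pick $a\in A$ attaining the minimum. Then $g(A)\le \delta_x(A)=\delta(A\cup\{x\})$ and $g(\{a\})=\delta(\{a,x\})$, so
\[\widehat\delta(A)\le \delta(A\cup\{x\})-\delta(\{a,x\}).\]
Now apply intersecting submodularity of $\delta$ to the sets $A$ and $\{a,x\}$. They intersect in $\{a\}$, and $\delta(\{a\})=0$, so
\[\delta(A\cup\{x\})\le \delta(A)+\delta(\{a,x\}).\]
Combining the two inequalities gives $\widehat\delta(A)\le\delta(A)$, which completes the domination.

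Taking the maximum of the $\eta_T$ over all $T$ then recovers $\delta$. Theorem~\ref{thm:sublinearChar}(3) gives sublinear-embeddability, and also a Minkowski embedding.
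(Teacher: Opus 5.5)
Your proposal is correct and follows the paper's proof essentially step for step: fix $x\in T$, apply Theorem~\ref{thm:submodular_match} to $\delta_x$ with $Y=T\setminus\{x\}$, extend $g$ via Proposition~\ref{prop:extend_strong}(2), and verify domination off $x$. The only differences are cosmetic. You derive $\delta(A\cup\{x\})\le\delta(A)+\delta(\{a,x\})$ from intersecting submodularity on $A$ and $\{a,x\}$, which collapses to (D4), where the paper cites (D2). You also dispose of $|T|\le 1$ rather than $\delta(T)=0$ as the trivial case.
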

\begin{proof}
For each $T \subseteq X$ we will construct a negative-type diversity $(X,\eta)$ such that $\eta(A) \leq \delta(A)$ for all $A \subseteq X$ and $\eta(T) = \delta(T)$. This shows that $(X,\delta)$ is the maximum of a finite set of negative-type diversities, so is sublinear-embeddable by Theorem~\ref{thm:sublinearChar}. \\

First note that if $\delta(T) = 0$ then we can use $\eta = 0$. We therefore assume that $\delta(T) > 0$ (and hence $|T| > 1$).  

Fix $T \subseteq X$ and $x \in T$. As $(X,\delta)$ is submodular, the function $\delta_x:2^{X \setminus \{x\}} \rightarrow \Re$ given by $\delta_x(A) = \delta(A \cup \{x\})$ is submodular, non-decreasing, and $\delta_x(\emptyset) = 0$. By Theorem~\ref{thm:submodular_match} there is a strongly submodular function $g:2^{X \setminus \{x\}} \rightarrow \Re$ such that $g(A) \leq \delta_x(A)$ for all $A \subseteq X \setminus \{x\}$ and $g(A) = \delta_x(A)$ when $A = T \setminus \{x\}$ or $|A| \leq 1$. 

Define $(X,\eta)$ by $\eta(\emptyset)  = \eta(\{x\}) = 0$ and 
\begin{align*}
\eta(A) &= g(A) - \min\{g(\{a\}):a \in A\} \\
\eta(A \cup \{x\}) & = g(A) 
\end{align*}
for all nonempty $A \subseteq X \setminus \{x\}$.  Then $(X,\eta)$ is negative type, by Proposition~\ref{prop:extend_strong} and $\eta(T) = g(T \setminus \{x\}) = \delta(T)$. For all $A \subseteq X \setminus \{x\}$, 
\begin{equation}
\eta(A \cup \{x\}) = g(A) \leq \delta(A \cup \{x\}) \label{eq:eta_matches}
\end{equation}
and 
\begin{align*}
\eta(A) & = g(A) - \min\{g(\{a\}):a \in A\} \\
& \leq \delta(A \cup \{x\}) - \min\{\eta(\{a,x\}) : a \in A \} \\
& = \delta(A \cup \{x\}) - \eta(\{a^*,x\}) & \mbox{ for minimizer $a^* \in A$ } \\
& = \delta(A \cup \{x\}) - \delta(\{a^*,x\}) \\
& \leq \delta(A),
\end{align*}
by the diversity triangle inequality (D2).
\end{proof}

Note that the converse of Theorem~\ref{thm:embed_submodular} is false: there are examples of diversities  which are sublinear-embeddable but not submodular.

\begin{example} \label{ex:sublinear_not_submodular1}
Let $X = \{a,b,c,d\}$. The diversity $(X,\delta)$ with $\delta$ given by
\[\delta(A) = \begin{cases} 3 & \mbox{if $|A| = 4$;}\\ 2 & \mbox{if $2 \leq |A| \leq 3$} \\ 0 & \mbox{ otherwise} \end{cases} \]
is sublinear-embeddable but not submodular.
\end{example}
\begin{proof}
Clearly $(X,\delta)$ is a symmetric diversity. It satisfies 
\[\frac{\delta(A \setminus \{a\}) }{\delta(A)} \geq \frac{|A|-2}{|A|-1}\]
for all $A \subseteq X$ such that $|A| \geq 2$ , $a \in A$. By Theorem~4.3 of \cite{bryant2023diversities}, $(X,\delta)$ is Minkowski-embeddable, so by Theorem~\ref{thm:sublinearChar} it is sublinear-embeddable. It is not, however, submodular, as can be seen by letting $A = \{a,b,c\}$ and $B = \{b,c,d\}$ and observing
\[\delta(A \cup B) + \delta(A \cap B) = 5 > 4 = \delta(A) + \delta(B).\]
\end{proof}

\begin{example} \label{ex:sublinear_not_submodular2}
Let $(\Re^2,\delta_K)$ be the Minkowski diversity where $K$ is the unit ball. Hence for any finite, nonempty, set of points $A \subseteq \Re^2$, $\delta_K(A)$ equals the circumradius of $A$.  Let $X = \{(-2,0),(0,1),(0,-1),(2,0)\}$. We see from Figure~\ref{fig:circum_not_submod} that $(X,\delta_K)$, the restriction  of $(\Re^2,\delta_K)$ to $X$, is sublinear-embeddable (by construction) but is not submodular. 
\end{example}

\begin{figure}[htb]
\centerline{\includegraphics[width=0.5\textwidth]{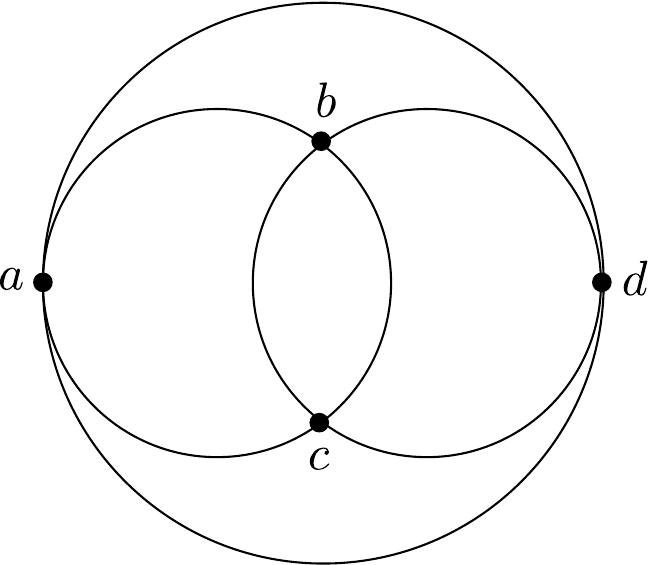}}
\caption{\label{fig:circum_not_submod} An example showing that not every sublinear-embeddable diversity is submodular. Here $X = \{a,b,c,d\}$ are the points $(-2,0)$, $(0,1)$, $(0,-1)$ and $(2,0)$. Let $A = \{a,b,c\}$ and $B = \{b,c,d\}$. Then $\delta_K(A \cup B) = 2$, $\delta_K(A \cap B) = 1$ and $\delta_K(A) =\delta_K(B) = 5/4$. }
\end{figure}

\section{Sublinear diversities and XOS functions}

We have shown that submodular diversities are sublinear-embeddable. Since (by Theorem~\ref{thm:sublinearChar}) the sublinear-embeddable diversities are exactly those given by the maxima of negative-type diversities, it follows that the {\em maximum} of two or more submodular diversities is also sublinear-embeddable. In this case, the converse is also true.

\begin{theorem} \label{thm:max_submod}
A finite diversity $(X,\delta)$ is sublinear-embeddable if and only if there are submodular diversities $(X,\delta_1),(X,\delta_2),\ldots,(X,\delta_m)$ such that 
\[\delta(A) = \max\{\delta_1(A),\ldots,\delta_m(A)\}\]
for all $A \subseteq X$. 
\end{theorem}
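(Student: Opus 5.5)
Both directions reduce to Theorem~\ref{thm:sublinearChar} and Theorem~\ref{thm:embed_submodular}, with Proposition~\ref{prop:basic_submodular_diversities} linking negative type to submodularity.

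For the forward direction, suppose $(X,\delta)$ is sublinear-embeddable. By Theorem~\ref{thm:sublinearChar} (part 3) there are diversities $(X,\delta_1),\ldots,(X,\delta_m)$ of negative type with $\delta(A)=\max_i \delta_i(A)$ for all $A$. By Proposition~\ref{prop:basic_submodular_diversities}(2) every diversity of negative type is submodular. So $\delta$ is already a maximum of submodular diversities, and this direction is done.

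For the reverse direction, suppose $\delta=\max\{\delta_1,\ldots,\delta_m\}$ with each $(X,\delta_i)$ a finite submodular diversity. First I check that $\delta$ is itself a diversity.
\begin{itemize}
\item It vanishes when $|A|\le 1$, since each $\delta_i$ does.
\item It is non-decreasing, since a pointwise maximum of non-decreasing functions is non-decreasing.
\item It is intersecting subadditive: if $A\cap B\neq\emptyset$ and $i$ attains the maximum at $A\cup B$, then $\delta(A\cup B)=\delta_i(A\cup B)\le \delta_i(A)+\delta_i(B)\le \delta(A)+\delta(B)$.
\end{itemize}
By (D1'), (D3), (D4), $\delta$ is therefore a diversity.

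Next, Theorem~\ref{thm:embed_submodular} shows each $\delta_i$ is sublinear-embeddable. Theorem~\ref{thm:sublinearChar} then writes each $\delta_i$ as a finite maximum $\max_j \eta_{ij}$ of negative-type diversities. Hence $\delta=\max_{i,j}\eta_{ij}$ is a finite maximum of negative-type diversities. Applying Theorem~\ref{thm:sublinearChar} once more, $(X,\delta)$ is sublinear-embeddable.

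All the substantive work sits in Theorem~\ref{thm:embed_submodular}, which is already available. The only point needing care is confirming that the maximum is a diversity, so that Theorem~\ref{thm:sublinearChar}, which is stated for diversities, applies; the three checks above cover this.
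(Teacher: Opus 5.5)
Your proposal is correct and follows the paper's proof. The forward direction combines Theorem~\ref{thm:sublinearChar} with Proposition~\ref{prop:basic_submodular_diversities}, and the reverse direction applies Theorem~\ref{thm:embed_submodular} to each $\delta_i$ and then concatenates the negative-type decompositions, which spells out the paper's brief ``their maximum is also sublinear-embeddable''. Your check that the maximum is a diversity is correct but unnecessary, since the statement already assumes $(X,\delta)$ is a diversity.
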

\begin{proof}
By Theorem~\ref{thm:sublinearChar} there is a finite collection $(X,\delta_1),\ldots,(X,\delta_m)$ of negative type diversities such that $\delta(A) = \max\{\delta_i(A):i=1,\ldots,m\}$ for all $A \subseteq X$. By Proposition~\ref{prop:basic_submodular_diversities}, each of these is submodular. 

Conversely, by Theorem~\ref{thm:embed_submodular} each of the diversities $(X,\delta_i)$ is sublinear-embeddable, so their maximum is also sublinear-embeddable.
\end{proof}

A function $f:2^X \rightarrow \Re$ is {\em additive} if there are non-negative weights $\{w_x\}_{x \in X}$ such that $f(\emptyset) = 0$ and $f(A) = \sum_{a \in A} w_a$ for all nonempty $A \subseteq X$. We say that $f:2^X \rightarrow \Re$ is an {\em XOS function} if there are additive functions $f_1,\ldots,f_m$ on $2^X$ such that for all $A \subseteq X$, $f(A) = \max\{f_i(A):i=1,\ldots,m\}$ \cite{nisan2000bidding}. XOS functions are subadditive. They also generalize non-decreasing submodular functions (with $f(\emptyset) = 0$). 

\begin{proposition} \label{prop:XOS}
A function $f:2^X \rightarrow \Re$ is XOS if and only if there are non-decreasing submodular functions $f_1,\ldots,f_m$ such that $f_i(\emptyset) = 0$ for all $i$ and 
\[f(A) = \max\{f_1(A),\ldots,f_m(A)\}\]
for all $A \subseteq X$.
\end{proposition}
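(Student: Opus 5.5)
One direction is immediate. An additive function $\sum_{a\in A} w_a$ with $w_a\ge 0$ is strongly submodular by Proposition~\ref{prop:basic_strong}(vi). Hence it is submodular by Proposition~\ref{prop:basic_strong}(iii), non-decreasing by Proposition~\ref{prop:basic_strong}(iv), and vanishes on $\emptyset$. So if $f$ is XOS, the additive functions $f_1,\ldots,f_m$ in its definition already form the required family of non-decreasing submodular functions with $f_i(\emptyset)=0$.

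For the converse, suppose $f=\max_i f_i$ with each $f_i$ non-decreasing, submodular and $f_i(\emptyset)=0$. Since a maximum of maxima of additive functions is again a maximum of additive functions, it suffices to prove that each single $f_i$ is XOS. So fix a non-decreasing submodular $h$ with $h(\emptyset)=0$. For each $T\subseteq X$ I will construct an additive $a_T$ with $a_T\le h$ pointwise and $a_T(T)=h(T)$. Then $h=\max_T a_T$, which is a finite maximum since $X$ is finite.

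The construction of $a_T$ is the standard greedy (Edmonds/Lov\'asz) vertex of the polymatroid. Order $X=\{x_1,\ldots,x_n\}$ so that $T=\{x_1,\ldots,x_k\}$ forms an initial segment, and set $S_j=\{x_1,\ldots,x_j\}$. Define $w_{x_j}=h(S_j)-h(S_{j-1})$ for $j\le k$ and $w_{x_j}=0$ for $j>k$. These weights are non-negative because $h$ is non-decreasing, and $a_T(T)=\sum_{j\le k} w_{x_j}=h(S_k)=h(T)$ by telescoping.

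The main step is the domination $a_T(A)\le h(A)$ for arbitrary $A$. Write $A\cap T=\{x_{j_1},\ldots,x_{j_r}\}$ with $j_1<\cdots<j_r$. Submodularity applied to $S_{j_t-1}$ and $(A\cap S_{j_t-1})\cup\{x_{j_t}\}$ gives the diminishing-returns bound
\[
w_{x_{j_t}} = h(S_{j_t})-h(S_{j_t-1}) \le h\bigl((A\cap S_{j_t-1})\cup\{x_{j_t}\}\bigr)-h(A\cap S_{j_t-1}).
\]
Summing over $t$, the right-hand side telescopes to $h(A\cap T)-h(\emptyset)$, so $a_T(A)\le h(A\cap T)$. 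Monotonicity then gives $h(A\cap T)\le h(A)$. This telescoping inequality is the only non-routine point, and it is classical (it is exactly the proof that greedy vectors lie in the polymatroid). Finally, the XOS definition requires $f(\emptyset)=0$, which holds since every $a_T(\emptyset)=0$.
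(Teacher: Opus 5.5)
Your proof is correct. It reaches the converse by a more explicit route than the paper, though the underlying idea is the same polymatroid greedy argument.

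The paper defines the polymatroid $P_{f_i}$ and cites the theorem that $f_i(A)=\max\{\sum_{a\in A}x_a : x\in P_{f_i}\}$ (Lov\'asz; Schrijver, Cor.~44.3g). It then argues that, since $P_{f_i}$ is a bounded polytope, each maximum is attained at one of its finitely many vertices. So $f_i$ is the maximum of the additive functions given by those vertices.

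You skip the polyhedral step. For each $T$ you write down the truncated greedy vector $a_T$ and check $a_T\le h$ and $a_T(T)=h(T)$ directly via the telescoping diminishing-returns inequality. That check is the classical proof of the fact the paper cites, and your $a_T$ are greedy vertices of the polymatroid of $h$.

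Your version is self-contained and needs no boundedness or vertex-attainment reasoning. It also gives an explicit family of at most $2^{|X|}$ additive functions for each $f_i$. It has the same ``dominate everywhere, match at $T$'' shape as the proofs of Theorems~\ref{thm:submodular_match} and~\ref{thm:embed_submodular}. The paper's version is shorter on the page only because it leaves that verification to the literature.
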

\begin{proof}
Additive functions are non-decreasing and strongly submodular (Proposition~\ref{prop:basic_strong} (vi) ) and therefore submodular, so one direction follows straight from the definition. 

For the converse, suppose that $f_1,\ldots,f_m$ are non-decreasing, submodular functions with $f_i(\emptyset) = 0$ for all $i$. For each $f_i$ define the {\em polymatroid}
\[P_{f_i} = \left\{x \in \Re_{\geq 0}^X: \sum_{a \in A} x_a  \leq f_i(A) \mbox{ for all $A \subseteq X$ } \right\}.\]
Note that $P_{f_i}$ is bounded, since for each $a \in X$, $0 \leq x_a \leq f_i(\{a\})$, for each $x \in P_{f_i}$.
Then for all $A \subseteq X$, $f_i(A) = \max \{ \sum_{a \in A} x_a : x \in P_{f_i} \}$, see \cite{Lovasz1983}, or Corollary~44.3g in \cite{Schrijver2003}. Since $P_{f_i}$ is a bounded polytope with finitely many vertices, and each maximum is attained at a vertex, $f_i$ is the maximum of a finite number of additive functions. This implies that $f$ is too.
\end{proof}

A pair $(X,\delta)$ with $\delta(A) = 0$ when $|A| \leq 1$ is a diversity when $\delta_x$ is non-decreasing and subadditive for all $x \in X$; it is a submodular diversity when $\delta_x$ is non-decreasing and submodular for all $x \in X$,  and negative type when $\delta_x$ is strongly submodular for all $x \in X$. Continuing along the same lines, we say that $(X,\delta)$ is an {\em XOS diversity} if for all $x \in X$ the function $\delta_x$ is  XOS. It follows from Theorem~\ref{thm:max_submod} and Proposition~\ref{prop:XOS} that every finite sublinear-embeddable diversity is an XOS diversity. We might expect the converse to hold. An AI-assisted search through small $n$ cases turned up the following counterexample. 

\begin{example} \label{ex:XOS}
Let $X = \{1,2,3,4,5\}$ and define $\delta$ by 
\[\delta(A) = \begin{cases} 0 & \mbox{ if $|A| \leq 1$ }\\
1 & \mbox{ if $|A| = 2$ or $A$ is one of $\{1,2,4\},\{2,3,5\},\{1,3,4\},\{2,4,5\},\{1,3,5\}$ } \\
2 & \mbox{ otherwise.}  \end{cases} \]
Then $(X,\delta)$ is an XOS diversity.
\end{example}
\begin{proof}
By rotational symmetry, if $\delta_x$ is XOS for $x = 5$ then it is XOS for all $x \in X$. Note that $\delta_5(\emptyset) = 0$, $\delta_5(A) = 1$ on singletons and pairs $\{1,3\},\{2,3\},\{2,4\}$ and $\delta_5(A) = 2$ otherwise. Define
\[W = \left[ \begin{matrix} %
1 & 0 & 0 & 0 & 1 & 1 & 0\\
0 & 1 & 0 & 0 & 0 & 1 & 0\\
0 & 0 & 1 & 0 & 0 & 0 & 1\\
0 & 0 & 0 & 1 & 1 & 0 & 1
\end{matrix} \right].\]
Then for nonempty $A \subseteq \{1,2,3,4\}$, $\delta_5(A) = \max_j \sum_{i \in A} W_{ij}$. Hence $\delta_5$ is XOS, and $(X,\delta)$ is an XOS diversity.
\end{proof}

\begin{example}
The diversity defined in Example~\ref{ex:XOS} is not the maximum of submodular diversities.
\end{example}
\begin{proof}
Let $(X,\eta)$ be a submodular diversity such that $\eta(A) \leq \delta(A)$ for all $A \subseteq X$. By submodularity, monotonicity and (D4) we have 
\begin{equation*}
\begin{array}{rcl}
\eta(\{2,3,5\}) + \eta(\{2,4,5\}) &\geq \eta(\{2,3,4,5\}) + \eta(\{2,5\}) & \geq \eta(\{3,4,5\}) + \eta(\{2,5\}) \\
\eta(\{1,3,5\}) + \eta(\{1,3,4\}) &\geq \eta(\{1,3,4,5\}) + \eta(\{1,3\}) & \geq \eta(\{3,4,5\}) + \eta(\{1,3\}) \\
\eta(\{2,5\}) + \eta(\{1,3\}) + \eta(\{1,2,4\}) & \geq \eta(\{1,2,3,4,5\}) & \geq \eta(\{3,4,5\}).
\end{array}
\end{equation*}
Combining these we have
\begin{align*}
3 \eta(\{3,4,5\}) & \leq \eta(\{2,3,5\}) + \eta(\{2,4,5\}) + \eta(\{1,3,5\}) + \eta(\{1,3,4\}) + \eta(\{1,2,4\}) \\
& \leq  \delta(\{2,3,5\}) + \delta(\{2,4,5\}) + \delta(\{1,3,5\}) + \delta(\{1,3,4\}) + \delta(\{1,2,4\}) \\
& \leq 5.
\end{align*}
Hence $\eta(\{3,4,5\}) < 2 = \delta(\{3,4,5\})$. There is no submodular diversity $\eta$ with $\eta(A) \leq \delta(A)$ for all $A$ and $\eta(\{3,4,5\}) = \delta(\{3,4,5\})$, and $\delta$ is not a maximum of submodular diversities.
\end{proof}

In summary, we have the following chain of (strict) inclusions for finite diversities:
\begin{align*}
\mbox{phylogenetic} & \subsetneq \mbox{ $\ell_1$-embeddable} \\
& \subsetneq \mbox{ negative type $=$ linear-embeddable}\\
& \subsetneq \mbox{ submodular }\\
& \subsetneq \mbox{ Minkowski-embeddable $=$ sublinear-embeddable} \\
& \subsetneq \mbox{ XOS}.
\end{align*}
All inclusions are strict.

\section{Representing submodular functions}

In this final section we take what we have learnt about submodular diversities (and their close relatives) and map it back to give new results about submodular functions (and their close relatives). Specifically, we explore the implications of the geometric embeddings for functions in these classes. 

From Proposition~\ref{prop:neg_is_strongsubmod} we see that a diversity $(X,\delta)$ has negative type if each of the local functions $\delta_x$ is strongly submodular. The fact that this holds simultaneously for all $x$ places constraints on the relationships between, say, the functions $\delta_x$ and $\delta_y$ for $x \neq y$. However, as we saw in Proposition~\ref{prop:extend_strong} it does not place constraints on which strongly submodular functions equal $\delta_x$ for some negative type diversity $(X,\delta)$. This allows us to map our embedding results for diversities of negative type to general embedding results for strongly submodular functions. The same applies for submodular diversities and submodular functions and, with a catch, for XOS functions. 

\begin{theorem} \label{thm:embed_strong}
Let $X$ be a finite set and let $f:2^X \rightarrow \Re$ be a set function with $f(\emptyset) = 0$. Then $f$ is strongly submodular if and only if there is $k \geq 1$, a map $\phi:X \rightarrow  \Re^k$ and a $k$-dimensional simplex $K \subseteq \Re^k$ such that for all $A \subseteq X$,
\[f(A) = R(\phi(A) \cup \{0\},K) = \inf \{\lambda \geq 0: \phi(A) \cup \{0\} \subseteq \lambda K + z \mbox{ for some $z \in \Re^k $} \}.\]
\end{theorem}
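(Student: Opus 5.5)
Both directions go through the diversity picture, with a new point $x^\ast$ adjoined to play the role of the origin.

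For the forward direction, suppose $f$ is strongly submodular with $f(\emptyset)=0$. Let $\widehat{X} = X \cup \{x^\ast\}$ for a new element $x^\ast$. Apply Proposition~\ref{prop:extend_strong} part 2 with $g=f$ on $2^{\widehat{X}\setminus\{x^\ast\}} = 2^X$. This gives a negative-type diversity $(\widehat{X},\whd)$ with $\whd(A\cup\{x^\ast\}) = f(A)$ for all $A\subseteq X$. By Theorem~\ref{thm:linearChar} (negative type implies embeddability into a Minkowski diversity with simplex kernel), there are $k$, a $k$-dimensional simplex $K$, and an isometric embedding $\psi:\widehat{X}\to(\Re^k,\delta_K)$. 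Since the generalized circumradius is translation invariant, we may set $\phi(a)=\psi(a)-\psi(x^\ast)$, which puts $x^\ast$ at $0$. Then
\[
f(A)=\whd(A\cup\{x^\ast\})=R(\phi(A)\cup\{0\},K)\quad\text{for all } A\subseteq X,
\]
including $A=\emptyset$, where both sides are $0$.

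For the converse, suppose $f(A)=R(\phi(A)\cup\{0\},K)$ with $K$ a $k$-simplex. Consider the finite set $\widehat{X}=X\cup\{x^\ast\}$ with $\delta(B)=R(\phi'(B),K)$, where $\phi'$ extends $\phi$ by $\phi'(x^\ast)=0$. If $\phi$ is not injective, or hits $0$, treat $\widehat{X}$ as an abstract index set; $\delta$ is still a (semi)diversity, being the pullback of $\delta_K$. This diversity is linear-embeddable by construction, since $\phi'$ is an isometric embedding into a Minkowski diversity with simplex kernel. Pullbacks are harmless here: the negative-type inequality in Theorem~\ref{thm:linearChar} is preserved under composing with an arbitrary map, because sums over subsets of $\widehat{X}$ push forward to sums over image subsets. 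Hence $(\widehat{X},\delta)$ has negative type. By Proposition~\ref{prop:neg_is_strongsubmod} (i)$\Rightarrow$(ii), the function $\delta_{x^\ast}$ on $2^X$ is strongly submodular, and $\delta_{x^\ast}(A)=\delta(A\cup\{x^\ast\})=f(A)$.

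The main obstacle is the non-injective case in the converse. Theorem~\ref{thm:linearChar} is stated for embeddings of a diversity, so I must check directly that the pulled-back function is negative type. I would do this with the quadratic form: given a zero-sum vector $\mathbf{x}$ on nonempty subsets of $\widehat{X}$, define $\mathbf{y}_C=\sum_{B:\phi'(B)=C}\mathbf{x}_B$ on subsets of the image. This $\mathbf{y}$ is zero-sum, and the quadratic form in $\mathbf{x}$ for $\delta$ equals the quadratic form in $\mathbf{y}$ for $\delta_K$ restricted to the image, which is $\le 0$. Alternatively, one can avoid the issue by perturbing, or simply note that the Minkowski diversity restricted to the finite image is a negative-type diversity and compose. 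The forward direction is straightforward given the cited results.
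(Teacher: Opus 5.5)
Your proposal is correct and follows the paper's proof essentially verbatim. For the forward direction you adjoin a point, apply Proposition~\ref{prop:extend_strong} (part 2) and Theorem~\ref{thm:linearChar}, then translate that point to the origin; for the converse you use Theorem~\ref{thm:linearChar} with Proposition~\ref{prop:neg_is_strongsubmod}. Your extra quadratic-form check for non-injective $\phi$ is valid but not strictly needed, since the paper's definition of an isometric embedding does not require injectivity.
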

\begin{proof}
Suppose that $f$ is strongly submodular and $z \not \in X$. From Proposition~\ref{prop:extend_strong} there is a diversity $(X \cup \{z\},\delta)$ of negative type such that $f(A) = \delta(A\cup \{z\})$ for all $A \subseteq X$. By Theorem~\ref{thm:linearChar} there is an embedding from $X \cup \{z\}$ into $\Re^k$ for some $k$ such that $\delta(A) = \delta_K(\phi(A))$ for a Minkowski diversity $\delta_K$ with kernel equal to the simplex. As $\delta_K$ is translation invariant we can assume that $\phi(z) = 0$, and the result follows directly.

The converse follows directly from Theorem~\ref{thm:linearChar}  and Proposition~\ref{prop:neg_is_strongsubmod}.
\end{proof}

\begin{example} \label{ex:simpleK}
As a simple illustration of the theorem, suppose that $w_x \geq 0$ for all $x \in X$ and $f(A) = \max\{w_a: a \in A\}$ with $f(\emptyset) = 0$. Then $f$ is strongly submodular. Let $\{\mathbf{e}_x\}_{x \in X}$ be the standard basis for $\Re^X$, let $\phi:X \rightarrow \Re^{X}$ be given by $\phi(x) = w_x \mathbf{e}_x$ for all $x \in X$ and let $K$ be the simplex with vertices $\mathbf{0}$ and $\{\mathbf{e}_x : x \in X\}$. Then $\delta_K(\phi(A) \cup \{0\}) = f(A)$ for all $A \subseteq X$. The case of $|X| = 2$ is given in Figure~\ref{fig:simpleK}. 
\end{example}

\begin{figure}[htb]
\centerline{\includegraphics[width=0.5\textwidth]{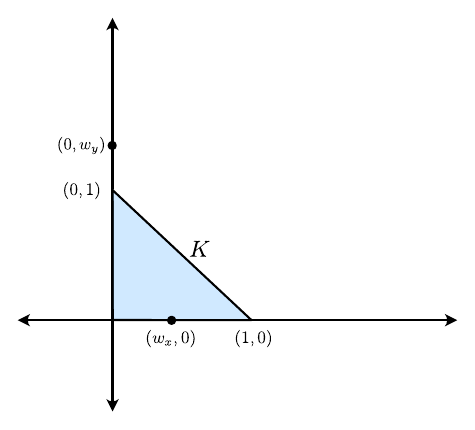}}
\caption{\small \label{fig:simpleK} An illustration of Example~\ref{ex:simpleK} for the case that $X = \{x,y\}$. }
\end{figure}

From Theorem~\ref{thm:embed_strong} we see that every strongly submodular function embeds into $(\Re^k,\delta_K)$ with $K$ given by a $k$-dimensional simplex. Any $k$-dimensional simplex will do: we can apply any invertible affine map to both $K$ and $\phi$ to achieve the same result. 

More generally, we can use the same process to embed submodular functions. 

\begin{proposition} \label{prop:embed_submod}
Let $X$ be a finite set and let $f:2^X \rightarrow \Re$ be a non-decreasing submodular function such that $f(\emptyset) = 0$. There is $k \geq 1$, a map $\phi:X \rightarrow \Re^k$ and a compact convex set $K \subseteq \Re^k$ with nonempty interior such that, for all $A \subseteq X$
\[f(A) = R(\phi(A) \cup \{0\},K) = \inf \{\lambda \geq 0: \phi(A) \cup \{0\} \subseteq \lambda K + z \mbox{ for some $z \in \Re^k $} \}.\]
\end{proposition}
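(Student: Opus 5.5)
The plan mirrors the proof of Theorem~\ref{thm:embed_strong}, with Proposition~\ref{prop:extend_sub} in place of Proposition~\ref{prop:extend_strong} and Theorem~\ref{thm:embed_submodular} together with Theorem~\ref{thm:sublinearChar} in place of Theorem~\ref{thm:linearChar}.

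First, adjoin a new point $z \notin X$ and set $X' = X \cup \{z\}$. Apply Proposition~\ref{prop:extend_sub} part 2 to the ground set $X'$ with distinguished point $z$ and $g = f$. Here $f$ is a non-decreasing submodular function on $2^{X' \setminus \{z\}} = 2^X$ with $f(\emptyset)=0$, as that proposition requires. This yields a submodular diversity $(X',\whd)$ with $\whd(A \cup \{z\}) = f(A)$ for every $A \subseteq X$, since $(A\cup\{z\})\setminus\{z\} = A$.

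Next, $X'$ is finite, so Theorem~\ref{thm:embed_submodular} shows that $(X',\whd)$ is sublinear-embeddable. By Theorem~\ref{thm:sublinearChar} (the equivalence of 1 and 2), there are $k \geq 1$, a compact convex $K \subseteq \Re^k$ with nonempty interior, and a map $\psi: X' \to \Re^k$ such that $\whd(B) = \delta_K(\psi(B)) = R(\psi(B),K)$ for all $B \subseteq X'$.

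Then I normalize by translation. The circumradius satisfies $R(B+t,K) = R(B,K)$, because $B \subseteq \lambda K + w$ if and only if $B + t \subseteq \lambda K + (w+t)$. So I define $\phi(x) = \psi(x) - \psi(z)$ for $x \in X$, which puts the image of $z$ at the origin. For every $A \subseteq X$ this gives
\[ f(A) = \whd(A \cup \{z\}) = R(\psi(A) \cup \{\psi(z)\},K) = R(\phi(A) \cup \{0\},K). \]
When $A = \emptyset$ this reads $R(\{0\},K) = 0 = f(\emptyset)$, which is consistent.

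None of these steps is hard, since the substantive content lies in Theorem~\ref{thm:embed_submodular}. The only points to check are that the hypotheses of Proposition~\ref{prop:extend_sub} part 2 match exactly, namely monotonicity and $f(\emptyset)=0$, and that the embedding given by Theorem~\ref{thm:sublinearChar} is an isometry on all subsets, including those containing $z$. Both hold as stated.
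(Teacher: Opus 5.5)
Your proposal is correct and follows the paper's proof essentially step for step. You adjoin a new point and use Proposition~\ref{prop:extend_sub} to realize $f$ as $\whd_z$ of a submodular diversity. You then invoke Theorem~\ref{thm:embed_submodular}, and you make explicit the use of Theorem~\ref{thm:sublinearChar} to obtain the compact convex kernel $K$, which the paper leaves implicit. Finally you translate so the new point maps to the origin, verifying translation invariance directly where the paper cites it.
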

\begin{proof}
Suppose that $x \not \in X$. By Proposition~\ref{prop:extend_sub} there is a submodular diversity $(X \cup \{x\},\delta)$ such that $\delta_x = f$. From Theorem~\ref{thm:embed_submodular} there is $k \geq 1$, a map $\phi: X \cup \{x\} \rightarrow \Re^k$ and a convex set $K$ such that $\delta_K(\phi(A)) = \delta(A)$ for all $A \subseteq X \cup \{x\}$. As $\delta_K$ is translation invariant (Proposition~1 of \cite{bryant2024linear}), we can assume that $\phi(x) = \mathbf{0}$. Hence for all $A \subseteq X$ we have
\[f(A) = \delta_x(A) = \delta(A \cup \{x\}) = \delta_K(\phi(A) \cup \{\mathbf{0}\}),\]
as required.
\end{proof}

Proposition \ref{prop:embed_submod} shows that every non-decreasing submodular function $f$ with $f(\emptyset) = 0$ has a representation via the generalized circumradius. It does not state {\em which} functions can have the same representation. The following theorem closes that gap.

\begin{theorem}
    Let $X$ be a finite set and let $f:2^X \rightarrow \Re$ be a non-decreasing function such that $f(\emptyset) = 0$. Then $f$ is XOS if and only if there is $k \geq 1$, a map $\phi:X \rightarrow \Re^k$ and a compact convex set $K \subseteq \Re^k$ with nonempty interior such that, for all $A \subseteq X$
\[f(A) = R(\phi(A) \cup \{0\},K) = \inf \{\lambda \geq 0: \phi(A) \cup \{0\} \subseteq \lambda K + z \mbox{ for some $z \in \Re^k $} \}.\]
\end{theorem}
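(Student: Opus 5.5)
The plan is to prove the two directions separately, using the diversity dictionary developed above.

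\emph{($\Rightarrow$)} Suppose $f$ is XOS. By Proposition~\ref{prop:XOS} we can write $f = \max\{f_1,\ldots,f_m\}$, where each $f_i$ is non-decreasing, submodular, and satisfies $f_i(\emptyset)=0$. Adjoin a new point $x \notin X$. For each $i$, apply Proposition~\ref{prop:extend_sub}(2) with $g=f_i$ to obtain a submodular diversity $(X\cup\{x\},\whd_i)$ with $\whd_i(A\cup\{x\}) = f_i(A)$ for all $A\subseteq X$. By Theorem~\ref{thm:embed_submodular} each $\whd_i$ is sublinear-embeddable. By Theorem~\ref{thm:sublinearChar}, each $\whd_i$ is then a maximum of finitely many negative-type diversities on $X\cup\{x\}$.

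Let $\delta=\max_i \whd_i$. This is again a maximum of finitely many negative-type diversities, so Theorem~\ref{thm:sublinearChar} gives a Minkowski embedding $\phi: X\cup\{x\}\to\Re^k$ with kernel $K$ such that $\delta_K(\phi(B)) = \delta(B)$. Since $\delta_K$ is translation invariant, we may take $\phi(x)=0$. Then for every $A\subseteq X$,
\[
R(\phi(A)\cup\{0\},K) = \delta(A\cup\{x\}) = \max_i f_i(A) = f(A).
\]
One small check is needed: $\delta$ is a diversity. This holds because each $\whd_i$ vanishes on sets of size at most one, and it is non-decreasing and intersecting subadditive, and both properties pass to pointwise maxima.

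\emph{($\Leftarrow$)} Suppose $f(A)=R(\phi(A)\cup\{0\},K)$. Define a diversity on $X\cup\{x\}$ by $\delta(B)=\delta_K(\psi(B))$, where $\psi$ extends $\phi$ with $\psi(x)=0$. This is the restriction of a Minkowski diversity, so it is a diversity. By construction it is Minkowski-embeddable, so by Theorem~\ref{thm:sublinearChar} it is sublinear-embeddable. Theorem~\ref{thm:max_submod} then writes it as $\delta=\max_i \delta_i$ with each $\delta_i$ a submodular diversity. Consequently
\[
f(A)=\delta(A\cup\{x\})=\max_i (\delta_i)_x(A).
\]
By Proposition~\ref{prop:basic_submodular_diversities}(1) each $(\delta_i)_x$ is submodular. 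Each is also non-decreasing by (D3), and vanishes on $\emptyset$ because $\delta_i(\{x\})=0$. Proposition~\ref{prop:XOS} then shows $f$ is XOS.

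Two points need some care. The first is that the kernel $K$ may represent only finite sets as stated, but only finite sets appear here, so this causes no difficulty. The second, and the only real obstacle, is the assembly step in the forward direction. Taking the maximum of the $\whd_i$ must still yield a single sublinear-embeddable diversity, and it must also restrict correctly on the sets containing $x$; this is precisely where the freedom in Proposition~\ref{prop:extend_sub}(2) to fix $\delta_x=f_i$ exactly is used. This also explains why XOS functions behave well here even though XOS diversities do not (Example~\ref{ex:XOS}): only the single local function $\delta_x$ is prescribed, and the other local functions are free.
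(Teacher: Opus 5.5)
Your proof is correct and follows the paper's argument essentially line for line. In the forward direction you lift each $f_i$ to a submodular diversity on $X\cup\{x\}$ via Proposition~\ref{prop:extend_sub}, take the pointwise maximum, embed it, and translate so that $\phi(x)=0$; the paper cites Theorem~\ref{thm:max_submod} for the embedding step, and you unpack it into Theorems~\ref{thm:embed_submodular} and~\ref{thm:sublinearChar}. In the converse you pull the Minkowski diversity back to $X\cup\{x\}$ and read off $f=\max_i(\delta_i)_x$, as the paper does, and your explicit checks that each $(\delta_i)_x$ is non-decreasing and vanishes on $\emptyset$, which Proposition~\ref{prop:XOS} requires, are a welcome addition to what the paper writes.
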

\begin{proof}
    Suppose that $f:2^X \rightarrow \Re$ is XOS. By Proposition~\ref{prop:XOS} there are submodular non-decreasing functions $f_1,\ldots,f_m$ such that $f_i(\emptyset) = 0$ for all $i$ and 
    \[f(A) = \max\{f_1(A),\ldots,f_m(A)\}\]
    for all $A \subseteq X$. Applying Proposition~\ref{prop:extend_sub} to each of these we obtain, for some $x \not \in  X$, submodular diversities $(X \cup \{x\},\delta^{(1)}),\ldots,(X \cup \{x\},\delta^{(m)})$  such that $\delta^{(i)}_x = f_i$ for $i=1,\ldots,m$. Let $(X\cup \{x\},\whd)$ be the diversity given by 
    \[\whd(A) = \max\{\delta^{(i)}(A): i=1,\ldots,m\}\]
    for all $A \subseteq X \cup \{x\}$. By construction 
    \[f(A) = \whd_x(A) = \whd(A \cup \{x\})\]
    for all $A \subseteq X$.
    
    By Theorem~\ref{thm:max_submod}, $(X \cup \{x\},\whd)$ is sublinear-embeddable, so there is $k \geq 1$, a map $\phi:X \cup \{x\} \rightarrow \Re^k$ and a compact, convex set $K$ with nonempty interior such that 
    \[\whd(A) = \delta_K(\phi(A))\]
    for all $A \subseteq X \cup \{x\}$. As $\delta_K$ is translation invariant, we can assume $\phi(x) = \mathbf{0}$. Then for all $A \subseteq X$,
    \[f(A) = \whd(A \cup \{x\}) = \delta_K(\phi(A) \cup \{\mathbf{0}\}).\]

    For the converse, suppose that for all $A \subseteq X$,
\[f(A) =  \inf \{\lambda \geq 0: \phi(A) \cup \{0\} \subseteq \lambda K + z \mbox{ for some $z \in \Re^k $} \}.\]
Choose $x \not \in X$, define $\phi(x) = \mathbf{0}$, and the diversity $(X \cup \{x\},\delta)$ by $\delta(A) = \delta_K(\phi(A))$ for all $A \subseteq X \cup \{x\}$. By Theorem~\ref{thm:max_submod}, $\delta$ is the maximum of submodular diversities $(X \cup \{x\},\delta^{(1)}),\ldots,(X \cup \{x\},\delta^{(m)})$ so that by Proposition~\ref{prop:XOS}, $f = \delta_x = \max\{\delta^{(i)}_x\}$ is XOS.     
\end{proof}

 \section*{Acknowledgements}
 
PT was supported by a Natural Sciences and Engineering Research Council (Canada) Discovery Grant (RGPIN-2025-04769). \\
 
Claude and ChatGPT were used to help identify relevant literature, to test conjectures, to help check proofs, to suggest ideas for proofs (both fruitful and not) and to format the bibliography. All text and proofs were written by the authors.

	\bibliographystyle{plain}
	\bibliography{submodular_diversity}

 \end{document}